\numberwithin{equation}{section}
\numberwithin{equation}{section}
\newtheorem{defi}{Definition}[section]
\newtheorem{theorem}[defi]{Theorem}
\newtheorem{lemma}[defi]{Lemma}
\newtheorem{proposition}[defi]{Proposition}
\newtheorem{remark}[defi]{Remark}
\newtheorem{remarks}[defi]{Remarks}
\newcommand{\cE}{{\mathcal E}}
\newcommand{\cH}{{\mathcal H}}
\newcommand{\cB}{{\mathcal B}}
\newcommand{\C}{{\mathbb C}}
\newcommand{\EE}{{\mathbb E}}
\newcommand{\N}{{\mathbb N}}
\newcommand{\II}{{\mathbb I}}
\newcommand{\R}{{\mathbb R}}
\newcommand{\PP}{{\mathbb P}}
\providecommand{\norm}[1]{\left\vert #1 \right\vert}
\renewcommand{\epsilon}{\varepsilon}
\newcommand{\sa}{{\sf a}}
\newcommand{\bb}{{\bb}}
\begin{document}

\title[Motion of a rigid body with a fluid-filled cavity]{On the motion of a fluid-filled rigid body with Navier Boundary conditions}

\author{}
\date{\today}
\author{Giusy Mazzone}
\address{Department of Mathematics\\
        Vanderbilt University\\
        Nashville, Tennessee\\l
        USA}
\email{giusy.mazzone@vanderbilt.edu}

\author{Jan Pr\"uss}
\address{Martin-Luther-Universit\"at Halle-Witten\-berg\\
         Institut f\"ur Mathematik \\
         Theodor-Lieser-Strasse 5\\
         D-06120 Halle, Germany}
\email{jan.pruess@mathematik.uni-halle.de}

\author{Gieri Simonett}
\address{Department of Mathematics\\
        Vanderbilt University\\
        Nashville, Tennessee\\
        USA}
\email{gieri.simonett@vanderbilt.edu}
\thanks{This work was supported by a grant from the Simons Foundation (\#426729, Gieri Simonett).}

\subjclass[2010]{Primary: 35Q35, 35Q30, 35B40, 35K58, 76D05}

 \keywords{ Normally stable, normally hyperbolic, global existence, critical spaces, fluid-solid interactions, rigid body motion, 
 Navier boundary conditions.}

\maketitle

\begin{abstract}
We consider the inertial motion of a system constituted by a rigid body with an interior cavity entirely filled with a viscous incompressible fluid. Navier boundary conditions are imposed on the cavity surface. We prove the existence of weak solutions and determine the critical spaces for the governing evolution equation. Using parabolic regularization in time-weighted spaces, we establish regularity of solutions and their long-time behavior. We show that every weak solution \`a la Leray-Hopf to the equations of motion converges to an equilibrium at an exponential rate in the  $L_q$-topology for every fluid-solid configuration.  

A nonlinear stability analysis shows that equilibria associated with the largest moment of inertia  
are asymptotically (exponentially) stable, whereas all other equilibria are normally hyperbolic and unstable in an appropriate topology. \\

\end{abstract}

\maketitle

\section{Introduction and formulation of the problem}
In this paper, we investigate the stability properties and long-time behavior of the system $\mathcal S$ constituted by a rigid body with a cavity completely filled by a viscous incompressible fluid. No external forces act on the system, and so 
the motion is driven by the inertia of the whole system once an initial angular momentum is imparted on $\mathcal S$ ({\em inertial motion}). We suppose that the fluid is subject to Navier boundary conditions on the cavity surface. Under these conditions (see \eqref{problem}$_{3,4}$ below), the fluid is allowed to partially slip on
the impermeable  boundary of the cavity.  More precisely, it is  assumed that the fluid normal velocity is zero, whereas the slip velocity is proportional to the shear stress on the solid boundary (\cite{Na1827}). 
 
 Problems of this kind arise in the study of fluid-solid interactions in 
many fields of the applied sciences, including microfluidics, geophysics, and cardiovascular science (see \cite{YbBaCoJoBo07,VaDoRa2007,Mi09,MuCa16} and the references contained therein).

From the mathematical point of view, so far there have been only few contributions aimed at furnishing a rigorous treatment of the problem at hand. Major results concerning the study of viscous incompressible fluids subject to Navier boundary conditions either consider the case where the fluid is confined to a fixed immovable domain  (\cite{Be05,ChQi10,NePe18,PrSiWi18,PrWi17}), or  the case where the fluid is flowing around moving (rigid or elastic) structures (\cite{NePe10,GeHi14,PlSu14,MuCa16,ChNe17}). 

In contrast, there is a large literature dealing with the motion of fluid-filled rigid bodies with {\em no-slip} boundary conditions,
it spans from the early work by Stokes \cite{Sto}, Zhukovskii \cite{Zh}, Hough \cite{Ho}, Poincar\'e \cite{Po}, and Sobolev \cite{So} to more recent contributions mostly concerned with stability problems (\cite{Ru3,Ru4,MoRu,Ch,Lya,KoShYu,KoKr00,SiTa,Ma12,GaMaZuCRM,DGMZ16,Ma16}). A comprehensive study of the motion of fluid-filled rigid bodies has recently been given in \cite{Ga17} (in an $L_2$ framework),
 and in \cite{MaPrSi18} (in a more general $L_q$ framework).
 It is shown that equilibria correspond to permanent rotations (rotations with constant angular velocity around the central axes of inertia) of $\mathcal S$ with the fluid at a relative rest with respect to the solid. Moreover, equilibria associated with the largest moment of inertia are stable, while all other equilibria are unstable. Finally, it has been proved that every Leray-Hopf weak solution converges to an equilibrium at an exponential rate in an appropriate topology. 
 
These results have a natural physical explanation. Due to  viscosity effects, after a sufficiently large time (which depends on the initial motion imparted on the system as well as the physical properties of the fluid) the fluid goes to a rest state relative to the solid. Thus, the long-time behavior of the coupled system is characterized by a rigid body motion with the system moving as a whole solid. The rigid body dynamics is quite rich, it includes permanent rotations, precessions, and more generally motions \'a la Poinsot. However, once the fluid is at rest, the pressure gradient must balance the centrifugal forces. Thus, the only rigid body motion the coupled system eventually performs is a permanent rotation. The aim of our work is to show that this ``stabilizing effect'' of the fluid on the motion of the rigid body can also be observed for the case of a fluid-filled rigid body with Navier boundary conditions.  

In this manuscript, we answer fundamental mathematical questions about the existence of weak ({\em \'a la Leray-Hopf}) and strong solutions to the equations of motion for the coupled system $\mathcal S$. We show that equilibrium configurations corresponding to {\em permanent rotations} of $\mathcal S$ around the central axes of inertia corresponding to the largest moment of inertia are asymptotically (actually, exponentially) stable. All other permanent rotations are unstable. Our stability results are all obtained in an $L_q$-framework.   
The main ingredient that  allows us to obtain such a result relies on the fact that the 
set of equilibria $\mathcal E$ (locally) forms a finite dimensional manifold, with dimension $m=1,2,3,$ depending on the mass distribution of $\mathcal S$ (see Proposition \ref{prop:equilibria}(c)). Moreover, a detailed study of the spectrum of the linearization  around a nontrivial equilibrium shows that equilibria associated with the largest moment of inertia are normally stable, while all other equilibria are normally hyperbolic (see Theorem \ref{th:spectrum}). These results yield a complete description of the long-time behavior of $\mathcal S$. 

The equations of motion given in \eqref{problem} form a coupled system of nonlinear parabolic PDEs and ODEs with bilinear nonlinearities. The mathematical model features a combination of {\em conservative} and {\em dissipative} properties as can be observed by the {\em conservation of angular momentum} \eqref{momentum-conserved} and the {\em energy inequality} \eqref{eq:strong_energy}. 
These are distinctive characteristics for this  type of fluid-solid interactions (see also \cite{DGMZ16,Ma16,Ga17,MaPrSi18}). We prove the existence of weak solutions \'a la Leray-Hopf to \eqref{problem} corresponding to initial data with arbitrary (finite) kinetic energy. Although the proof adapts standard tools (see e.g. \cite{ChQi10} for the classical Navier-Stokes equations in bounded domains), it nevertheless yields the first result in the literature concerning existence of global weak solutions for the fluid-solid interaction problem \eqref{problem}. In Theorem~\ref{th:local_strong}, we determine the largest space of initial data for which the equations of motion are well-posed. The functional setting we use is that of maximal $L_p-L_q$ regularity in time-weighted $L_p$ spaces (\cite{PrSi16}). Using parabolic regularization in time-weighted spaces (\cite{PrSi16,PrSiWi18}), we show that for any initial data with finite energy, all corresponding weak solutions will converge to an equilibrium at an exponential rate, in the topology of $H^{2\alpha}_q$ with $\alpha \in [0,1)$ and $q\in (1,6)$. The latter is shown by proving that, for each Leray-Hopf solution, there exists a time $\tau>0$ at which the solution has gained sufficient regularity to serve as initial condition for a strong solution constructed in Theorem~\ref{th:local_strong}. 
Well-posedness in $[\tau, \infty)$ then follows by choosing a suitable time-weight which ensures that weak and strong solutions coincide 
and trajectories are relatively compact. It is worth emphasizing that all the above results hold for every fluid-solid configuration. In particular, no assumption has been made on the geometric shape of the body and/or mass distribution within $\mathcal S$. 

Throughout the paper, the friction constant  $\zeta$ in \eqref{problem}$_4$ is assumed to be strictly positive. In fact, in the case 
$\zeta=0$, a preliminary analysis shows that the decay of the fluid's relative velocity can be obtained either in case the fluid domain is not 
a solid of revolution, or else in topological spaces corresponding to fields orthogonal to rigid body motions around the axis of symmetry. 
This is a consequence of Lemma~\ref{lem:FK}. Indeed, in the case $\zeta=0$, the dissipation in the energy balance (see Lemma \ref{lem:energy_balance})  would be only in terms of $|D(v)|_{L_2(\Omega)}$. The dissipative term then  would not be enough to control the norm $|v|_{L_2(\Omega)}$, as no information is given for the tangential component of $v$ to the boundary (see e.g. \cite{SoSc73} for the classical Navier-Stokes equations). 

Here is the plan of our paper. Below, we provide the mathematical formulation of the problem. In Section \ref{se:energy and equilibria}, we introduce the available energy of our system and characterize the set of equilibria.  
The well-posedness of the governing equations and the relevant critical spaces are discussed in 
Section~\ref{se:local well-posedness and critical spaces}. The existence of Leray-Hopf solutions and related properties are proved in Section~\ref{sec:weak}. In Section~\ref{se:long-time}, we analyze the spectrum of the linearization  at an equilibrium, and provide a complete characterization of the nontrivial equilibria as either normally stable or normally hyperbolic. The stability properties of our equilibria, at the nonlinear level, is then analyzed.  We finally use all information gained in the previous sections to provide an exhaustive  description of the long-time behavior of $\mathcal S$. \\

The equations governing the motion of the system fluid-filled rigid body $\mathcal S$ in a {\em non-inertial frame} with origin at $G$
(the center of mass of the whole system), and axes ${\sf e}_i$, $i=1,2,3$, directed along the principal axes of inertia of $\mathcal S$, read as follows: 
\begin{equation}
\label{problem}
\begin{aligned}
\partial_t v + v\cdot\nabla v +(\dot\sa -\dot\omega)\times x + 2(\sa-\omega)\times v
-\upnu \Delta v  +\nabla p & =0 &&\text{in} \;\; \Omega\times\R_+, \\
{\rm div}\, v &= 0 &&\text{in} \;\; \Omega\times\R_+, \\
(v|n) &=0 &&\text{on} \;\; \Gamma\times\R_+, \\
2\upnu \,P_\Gamma(D(v)n)+\zeta v&=0 &&\text{on} \;\; \Gamma\times\R_+, \\
\II\, \dot\sa + (\sa-\omega)\times\II\sa &=0 &&\text{on}\;\; \R^3\times\R_+,\\
(v(0),\sa(0))= (v_0, &\sa_0)  &&\text{in} \;\; \Omega\times \R^3.\\
\end{aligned}
\end{equation}
Here  $\Omega\subset \R^3$ is a bounded domain with boundary $\Gamma=\partial\Omega$ of class $C^{3}$, $v$ denotes the fluid velocity relative to the rigid body, $\upnu$ is its coefficient of kinematic viscosity, and $p$ the pressure field. 
Moreover, $\II={\rm diag}[\lambda_1,\lambda_2,\lambda_3]$ denotes the inertia tensor of $\mathcal S$ with respect to $G$, $\lambda_j$
 are the central moments of inertia of $\mathcal S$, $\II\sa $ is the total angular momentum of $\mathcal S$ with respect to $G$, and 
\begin{equation}
\label{def-omega}
\omega :=\II^{-1}\int_\Omega x \times v\,dx.
\end{equation}
Without loss of generality, we have set the fluid density  equal to one.
We note that the total angular momentum is a conserved quantity,
\begin{equation}
\label{momentum-conserved}
|\II \sa |= |\II \sf a_0|,
\end{equation}
as can readily be seen by taking the inner product of \eqref{problem}$_{5}$ with $\II\sa$. 
The equations in \eqref{problem} can be derived as in \cite{Ma12} (see also \cite{DGMZ16}). The noticeable difference here are the
{\em Navier boundary conditions} \eqref{problem}$_{3,4}.$ The vector field $n$ denotes the outer unit normal on $\Gamma$ and $P_\Gamma:= I-n\otimes n$ is the orthogonal projection onto the tangent bundle ${\sf T}\Gamma$. Equation \eqref{problem}$_{3}$ is a kinematic condition that describes the impermeability of the boundary surface. The constant $\zeta>0$ in \eqref{problem}$_{4}$ is the coefficient of friction between the fluid and the boundary, and 
\[
D(v)=\frac 12 (\nabla v+\nabla v^T)
\]
is the fluid stretching tensor. Equation \eqref{problem}$_{4}$ represents a {\em partial-slip} boundary condition. Dividing both sides of \eqref{problem}$_4$ by $\zeta$ and taking the limit as $\zeta\to \infty$  one formally obtains the case of {\em no-slip} boundary condition $v=0$ on $\Gamma$ considered in~\cite{MaPrSi18}. 

Throughout this paper, we use the notation $(\cdot|\cdot)$ for the Euclidean inner product in $\R^3$ and $|\cdot|$ for the associated norm. Moreover, for a Banach space $X$, $|\cdot|_{X}$ will denote its norm, and  
$B_X(u_0,r)$  the open ball of radius $r$ centered at $u_0\in X$, with respect to the topology of $X$. 

The set of all linear bounded operators from $X$ to $Y$, with $X, Y$ Banach spaces, is denoted by $\cB(X,Y)$.
If $A:D(A)\subset X\to X$ is a linear operator,  $\sigma(A), N(A), R(A)$ stand for the spectrum, the null-space,
and the range of $A$, respectively.

For $q\in [1,\infty]$, $L_q(D)$ denotes the classical Lebesgue spaces, where  $D$ is either $\Omega$ or $\Gamma$. 
In some of the  proofs (and when the context is clear), $|\cdot|_{D}$ will be used for the $L_2$-norm on $D$,
instead of $|\cdot|_{L_2(D)}$. 

If $k\in \N$, $J\subset \R$ is an open interval and $X$ is a Banach space, then $C^k(J;X)$ (resp. $L_p(J;X)$  and $H^k_p(J;X)$)  represents  the space of all $k$-times continuously differentiable (resp. $L_p$- and $H^k_p$-) 
functions\footnote{We will use the same notation for scalar and vector valued functions.} on $J$ with values in $X$.  
For $1<q<\infty$, we 
denote by
\[
L_{q,\sigma}(\Omega)=\{v\in L_q(\Omega)^3: {\rm div}\, v=0\;\;\text{in}\;\;\Omega,\quad  (v|n)=0\;\;\text{on}\;\;\Gamma\}
\]
the space of all solenoidal vector fields on $\Omega$ and  by $\PP$ the Helmholtz projection of 
$L_q(\Omega)^3$ onto $L_{q,\sigma}(\Omega)$. 
Moreover,  we set
\begin{equation}
\label{H-2-parallel}
{_\parallel H}^2_{q,\sigma}(\Omega)
= \{u\in H^2_{q}(\Omega)^3\cap L_{q,\sigma}(\Omega):\; 2\upnu P_\Gamma(D(v)n)+\zeta u=0\text{ on }\Gamma\}.  
\end{equation}
\section{Energy and equilibria}\label{se:energy and equilibria}
\subsection{Dissipation of energy}
The available energy of system \eqref{problem} is given by
\begin{equation}
\label{energy}
{\sf E}={\sf E}(v,\sa):=\frac{1}{2}\big[ |v|_{L_2(\Omega)}^2 - (\II\omega |\omega) + (\II\sa|\sa)\big].
\end{equation}
From the physical point of view, {\sf E} represents the {\em total kinetic energy} of the system fluid-filled rigid body.  
The above functional is positive definite along the solutions 
to \eqref{problem} thanks to the following  result proved in \cite[Sections  7.2.2--7.2.4]{KoKr00} (see also  \cite[Lemma 2.3.3 and the following remarks]{Ma16}).

\begin{lemma}\label{lem:positive_def}
There exists a constant $c\in (0,1]$ such that 
\[
c|v|^2_{L_2(\Omega)}\le |v|^2_{L_2(\Omega)} - (\II\omega |\omega) \le |v|^2_{L_2(\Omega)},
\quad v\in L_2(\Omega)^3. 
\] 
\end{lemma}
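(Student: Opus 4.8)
The plan is to reduce the claimed two–sided estimate to elementary facts about the relevant inertia tensors together with the orthogonal decomposition of $L_2(\Omega)^3$ along the (three–dimensional) space of rigid rotation fields. The upper bound is immediate: $\II$ is positive definite, since its eigenvalues $\lambda_j$ are the central moments of inertia of $\mathcal S$, hence positive; thus $(\II\omega|\omega)\ge0$ and $|v|_{L_2(\Omega)}^2-(\II\omega|\omega)\le|v|_{L_2(\Omega)}^2$. The real content is the lower bound.

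First I would set $\mathcal R:=\{a\times x:\ a\in\R^3\}\subset L_2(\Omega)^3$ and introduce the inertia tensor of the cavity, $\II_f:=\int_\Omega(|x|^2 I-x\otimes x)\,dx$, which is symmetric and, since $\Omega$ is a nonempty open set, strictly positive definite. From the identity $\int_\Omega(a\times x\,|\,b\times x)\,dx=(\II_f a\,|\,b)$ one reads off that $a\mapsto a\times x$ is a linear isomorphism of $\R^3$ onto $\mathcal R$, and that for $v\in L_2(\Omega)^3$ its orthogonal projection onto $\mathcal R$ is $P_{\mathcal R}v=(\II_f^{-1}m)\times x$, where $m:=\int_\Omega x\times v\,dx$; indeed $\int_\Omega(v-(\II_f^{-1}m)\times x\,|\,b\times x)\,dx=(m\,|\,b)-(\II_f\II_f^{-1}m\,|\,b)=0$ for every $b\in\R^3$. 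Hence $|P_{\mathcal R}v|_{L_2(\Omega)}^2=(\II_f^{-1}m\,|\,m)$, so Pythagoras gives $|v|_{L_2(\Omega)}^2=(\II_f^{-1}m\,|\,m)+|v-P_{\mathcal R}v|_{L_2(\Omega)}^2$. On the other hand $\omega=\II^{-1}m$ by \eqref{def-omega}, whence $(\II\omega|\omega)=(\II^{-1}m\,|\,m)$, and subtracting the two identities yields
\[
|v|_{L_2(\Omega)}^2-(\II\omega|\omega)=((\II_f^{-1}-\II^{-1})m\,|\,m)+|v-P_{\mathcal R}v|_{L_2(\Omega)}^2 .
\]

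It then remains to show that the first term on the right dominates $|P_{\mathcal R}v|_{L_2(\Omega)}^2=(\II_f^{-1}m\,|\,m)$ up to a fixed positive factor. Here I would use the additivity $\II=\II_f+\II_s$, where $\II_s$ is the inertia tensor (about $G$) of the rigid body; since the body is a genuine three–dimensional shell enclosing the cavity, it occupies a set of positive Lebesgue measure, so $\II_s$ is strictly positive definite and
\[
\kappa:=\min_{0\neq m\in\R^3}\frac{(\II m\,|\,m)}{(\II_f m\,|\,m)}=1+\min_{0\neq m\in\R^3}\frac{(\II_s m\,|\,m)}{(\II_f m\,|\,m)}>1 .
\]
The resulting operator inequality $\II\ge\kappa\,\II_f>0$ gives $\II^{-1}\le\kappa^{-1}\II_f^{-1}$, hence $\II_f^{-1}-\II^{-1}\ge(1-\kappa^{-1})\II_f^{-1}$, so with $c:=1-\kappa^{-1}\in(0,1)$ and using $c<1$,
\[
|v|_{L_2(\Omega)}^2-(\II\omega|\omega)\ \ge\ c\,|P_{\mathcal R}v|_{L_2(\Omega)}^2+|v-P_{\mathcal R}v|_{L_2(\Omega)}^2\ \ge\ c\,|v|_{L_2(\Omega)}^2 ,
\]
which is the assertion.

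The only step that is not pure $3\times3$ linear algebra (together with the finite–rank orthogonal decomposition, so that no compactness argument is needed) is the strict positivity $\II_s>0$ — equivalently $\kappa>1$ — which encodes the physically obvious fact that the rigid body actually contributes to each central moment of inertia of $\mathcal S$; a subset of $\R^3$ of positive measure cannot, modulo null sets, lie on a line. Alternatively, one may simply quote \cite{KoKr00,Ma16}, where the positive definiteness of this "added–inertia" form is established in exactly the present generality; the argument above additionally makes the constant explicit, $c=1-\kappa^{-1}$.
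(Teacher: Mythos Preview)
Your argument is correct. The paper does not actually supply a proof of this lemma; it simply records the result and cites \cite[Sections~7.2.2--7.2.4]{KoKr00} and \cite[Lemma~2.3.3]{Ma16}. Your write-up reproduces in a self-contained way exactly the mechanism behind those references: decompose $L_2(\Omega)^3$ orthogonally along the three-dimensional subspace $\mathcal R$ of rigid rotations, identify $(\II\omega|\omega)=(\II^{-1}m|m)$ and $|P_{\mathcal R}v|_{L_2}^2=(\II_f^{-1}m|m)$, and then reduce the lower bound to the matrix inequality $\II_f^{-1}-\II^{-1}\ge (1-\kappa^{-1})\II_f^{-1}$ obtained from $\II=\II_f+\II_s\ge\kappa\,\II_f$. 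Each step checks out, including the monotone inversion $\II\ge\kappa\II_f>0\Rightarrow\II^{-1}\le\kappa^{-1}\II_f^{-1}$ for symmetric positive definite matrices.

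The only point worth flagging is the one you already isolate: the strict inequality $\kappa>1$ requires $\II_s>0$, i.e.\ that the rigid part of $\mathcal S$ has a genuinely three-dimensional mass distribution. This is implicit in the physical setup of the paper (a rigid body with an interior cavity) and is the same standing hypothesis under which the cited references prove the result; you are right to make it explicit. A small bonus of your presentation over a bare citation is the explicit constant $c=1-\kappa^{-1}$, which the paper does not record.
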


Sufficiently smooth solutions to \eqref{problem} enjoy the following energy balance. 
\begin{lemma}\label{lem:energy_balance}
Consider $(v,\sa,p)$, with 
$v\in H^1_2((0,T);L_{2,\sigma}(\Omega))\cap L_2((0,T);   {_\parallel}H^2_{2,\sigma}(\Omega))$, 
$\sa\in H^1_\infty((0,T))$ and $p\in L_2((0,T);H^1_2(\Omega))$,
 satisfying \eqref{problem} a.e. in $\Omega\times (0,T)$ for some $T\in (0,\infty]$. Then,  
$$ \frac{d}{dt}{\sf E}+2\upnu |D(v)|^2_{L_2(\Omega)}+\zeta|v|^2_{L_2(\Gamma)}=0\quad {in}\quad (0,T).$$
\end{lemma}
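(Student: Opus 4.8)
The plan is to derive the energy balance by testing the momentum equation against $v$ and the rigid-body ODE against an appropriate multiplier, then combining. First I would take the $L_2(\Omega)$ inner product of \eqref{problem}$_1$ with $v$. The term $(\partial_t v | v)$ gives $\frac12 \frac{d}{dt}|v|_{L_2(\Omega)}^2$. For the convective term, since ${\rm div}\,v=0$ in $\Omega$ and $(v|n)=0$ on $\Gamma$, integration by parts yields $\int_\Omega (v\cdot\nabla v | v)\,dx = 0$. The Coriolis-type term $2(\sa-\omega)\times v$ is orthogonal to $v$ pointwise, so it contributes nothing. The pressure term integrates to zero by ${\rm div}\,v=0$ and $(v|n)=0$. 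The viscous term requires care under Navier boundary conditions: using the identity $\int_\Omega (\Delta v | v)\,dx = -2\int_\Omega |D(v)|^2\,dx + 2\int_\Gamma (D(v)n | v)\,d\Gamma$ (valid for solenoidal $v$, up to the standard manipulation replacing $\nabla v$ by $D(v)$), and then decomposing $v = P_\Gamma v$ on $\Gamma$ (since $(v|n)=0$), we get $\int_\Gamma (D(v)n|v)\,d\Gamma = \int_\Gamma (P_\Gamma(D(v)n)|v)\,d\Gamma = -\frac{\zeta}{2\upnu}\int_\Gamma |v|^2\,d\Gamma$ by the boundary condition \eqref{problem}$_4$. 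Hence the inner product of \eqref{problem}$_1$ with $v$ produces
\[
\frac12\frac{d}{dt}|v|_{L_2(\Omega)}^2 + \big((\dot\sa-\dot\omega)\times x \,\big|\, v\big)_{L_2(\Omega)} + 2\upnu |D(v)|_{L_2(\Omega)}^2 + \zeta|v|_{L_2(\Gamma)}^2 = 0.
\]

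Next I would handle the ODE \eqref{problem}$_5$. Taking the inner product with $\sa$ and using that $(\,(\sa-\omega)\times\II\sa\,|\,\sa\,) = (\,(\sa-\omega)\times\II\sa\,|\,\sa-\omega\,) + (\,(\sa-\omega)\times\II\sa\,|\,\omega\,)$; the first summand vanishes (cross product orthogonal to one factor) and the second equals $-(\,(\sa-\omega)\times\omega\,|\,\II\sa\,) = -(\,\omega\times\sa\,|\,\II\sa\,)\cdot(-1)$... more cleanly, one computes $((\sa-\omega)\times\II\sa\,|\,\sa) = (\II\sa\times\sa\,|\,\sa-\omega) $, and since $\II\sa\times\sa$ is orthogonal to $\sa$, this is $-(\II\sa\times\sa\,|\,\omega) = (\sa\times\II\sa\,|\,\omega)$. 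Thus
\[
\frac12\frac{d}{dt}(\II\sa|\sa) + (\sa\times\II\sa\,|\,\omega) = 0.
\]
The remaining task is to reconcile the cross terms. Recalling $\omega = \II^{-1}\int_\Omega x\times v\,dx$, we have $\int_\Omega ((\dot\sa-\dot\omega)\times x\,|\,v)\,dx = (\dot\sa - \dot\omega\,|\,\int_\Omega x\times v\,dx) = (\dot\sa-\dot\omega\,|\,\II\omega)$. So summing the two identities gives
\[
\frac{d}{dt}{\sf E} + 2\upnu|D(v)|_{L_2(\Omega)}^2 + \zeta|v|_{L_2(\Gamma)}^2
= \frac12\frac{d}{dt}(\II\omega|\omega) + (\dot\sa-\dot\omega\,|\,\II\omega) + (\sa\times\II\sa\,|\,\omega),
\]
using the definition \eqref{energy} of ${\sf E}$; the claim is that the right-hand side vanishes.

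The main obstacle is verifying that this right-hand side is zero, i.e. the algebraic identity $\frac12\frac{d}{dt}(\II\omega|\omega) + (\dot\sa\,|\,\II\omega) - (\dot\omega\,|\,\II\omega) + (\sa\times\II\sa\,|\,\omega) = 0$. Since $\II$ is symmetric and constant, $\frac12\frac{d}{dt}(\II\omega|\omega) = (\II\omega|\dot\omega) = (\dot\omega\,|\,\II\omega)$, so two terms cancel and we are left with needing $(\dot\sa\,|\,\II\omega) + (\sa\times\II\sa\,|\,\omega) = 0$, equivalently $(\II\dot\sa\,|\,\omega) + (\II(\sa\times\II\sa)\,|\,\omega)\cdot(\text{sign})$... — the cleanest route is to note $(\sa\times\II\sa\,|\,\omega) = -(( \sa - \omega)\times\II\sa\,|\,\omega)$ is not obviously right, so instead I would go back to \eqref{problem}$_5$ itself: $\II\dot\sa = -(\sa-\omega)\times\II\sa$, hence $(\II\dot\sa\,|\,\omega) = -(\,(\sa-\omega)\times\II\sa\,|\,\omega)$, while $(\sa\times\II\sa\,|\,\omega) = (\,(\sa-\omega)\times\II\sa\,|\,\omega) + (\,\omega\times\II\sa\,|\,\omega)$ and the last term vanishes; therefore $(\II\dot\sa\,|\,\omega) + (\sa\times\II\sa\,|\,\omega) = 0$, which is exactly what is needed after observing $(\dot\sa\,|\,\II\omega) = (\II\dot\sa\,|\,\omega)$ by symmetry of $\II$. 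This completes the cancellation and yields the energy balance. Throughout, the stated regularity $v\in H^1_2((0,T);L_{2,\sigma})\cap L_2((0,T);{_\parallel}H^2_{2,\sigma})$, $\sa\in H^1_\infty((0,T))$, $p\in L_2((0,T);H^1_2)$ is exactly what makes every integration by parts and every product rule rigorous (the trace of $D(v)$ on $\Gamma$ lies in $L_2(\Gamma)$, $\omega\in H^1_2((0,T))$ via its definition, and all the pointwise-in-time identities integrate to the distributional derivative statement).
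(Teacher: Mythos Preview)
Your proof is correct and follows essentially the same approach as the paper: test \eqref{problem}$_1$ against $v$, handle the viscous term via $\Delta v = 2\,{\rm div}\,D(v)$ and the Navier boundary condition, and use \eqref{problem}$_5$ to close the rigid-body terms. The only difference is organizational: the paper expands $\frac{d}{dt}{\sf E}$ directly and immediately isolates the single surviving cross term $(\II\dot\sa\,|\,\sa-\omega) = -((\sa-\omega)\times\II\sa\,|\,\sa-\omega) = 0$, whereas you test the ODE against $\sa$ separately and then do a bit more bookkeeping to see the cancellation.
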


\medskip
\noindent {\em Proof.}
From \eqref{energy} and \eqref{problem}, one has 
\begin{equation*}
\begin{aligned}
\frac{d}{dt}{\sf E}
& =(\partial_t v|v)_\Omega -(\II\dot\omega|\omega) + (\II\,\dot\sa |\sa) \\
& =\int_\Omega \big( (\upnu\Delta v-\nabla p|v) -(v|v\cdot\nabla v) 
    + ((\dot\omega -\dot\sa)  \times x|v) + 2 ((\omega -\sa)\times v|v)\big)\,dx\\
&\quad -(\II\dot\omega|\omega) +(\II\dot\sa|\sa)\\
& =2\upnu\int_\Omega ({\rm div}D(v)|v) \,dx 
    + (\II(\dot\omega -\dot\sa)|\omega)  -(\II\dot\omega|\omega) +(\II\dot\sa|\sa)\\
& = -\zeta |v|^2_{\Gamma}-2\upnu |D(v)|^2_\Omega +(\II\dot\sa|\sa -\omega)\\
& = -\zeta |v|^2_{\Gamma}-2\upnu |D(v)|^2_\Omega - ((\sa-\omega)\times \II\sa|\sa -\omega)
=-\zeta |v|^2_{\Gamma}-2\upnu |D(v)|^2_\Omega. \qquad\quad{\square}
\end{aligned}
\end{equation*}

\medskip\noindent
The  energy is a {\em strict Lyapunov functional} as the function 
$[t\mapsto {\sf E}(v(t),\sa(t))]$ is strictly decreasing along non-constant solutions.  
In fact, suppose $\frac{d}{dt} {\sf E}=0$ on some interval $(t_1,t_2)$.
Then $D(v)=0$ on $\Omega\times (t_1,t_2)$ and $v=0$ on $\Gamma \times (t_1,t_2)$. 
Hence $v=0$ on $\Omega\times (t_1,t_2)$ 
by Korn's inequality (see for instance \eqref{eq:korn}).
This implies $\omega=0$ on $(t_1,t_2)$, and hence 
$$\dot\sa \times x +\nabla p=0\quad\text{on}\quad (t_1,t_2).$$
Taking the curl on both sides yields
${\rm curl}\, (\dot\sa\times x)=2\dot\sa=0,$
and therefore, we are at an equilibrium. 

\subsection{Equilibria}
Suppose  $(v,\sa)\in {_\parallel H}^2_{q,\sigma}(\Omega)\times\R^3$ is an equilibrium of \eqref{problem}.
Then 
$$2\upnu |D(v)|^2_{L_2(\Omega)}+\zeta|v|^2_{L_2(\Gamma)}=0,$$ and by the argument used above, $v=0$.
Consequently, $\omega=0$ and \eqref{problem}$_5$ then yields
$a\times \II\sa=0.$
This implies  either $\sa = 0$, or $\sa$ and $\II\sa$ must be parallel and therefore,
$\sa\in {\sf N}(\lambda - \II)$ for some $\lambda\in\{\lambda_1,\lambda_2,\lambda_3\}.$ 
This yields  the set of non-trivial equilibria 
\begin{equation}
\label{equilibria1}
\cE= \{(0,\sa_\ast) : a_\ast\in {\sf N}(\lambda_\ast - \II),\;\sa_*\neq 0,\; \lambda_\ast\in\{\lambda_1,\lambda_2,\lambda_3\}\},
\end{equation}
with constant pressure $p$ in each case.
We note, and this will turn out to be important later on, that $\cE$ is locally a manifold 
of dimension $m\in\{1,2,3\}$, with $m$ depending on the distribution of the  central moments of inertia $\lambda_i$.
In more detail, for each non-trivial equilibrium $(0,\sa_*)$, there is a neighborhood  
$U$ in ${_\parallel H}^2_{q,\sigma}(\Omega)\times\R^3$ such that
$\cE\cap U$ is a smooth (in fact flat) manifold.

We summarize our results.
\begin{proposition}\label{prop:equilibria}
The following assertions hold for problem \eqref{problem}.
\begin{enumerate}
\setlength\itemsep{1mm}
\item[(a)] The total angular momentum is conserved.
\item[(b)] The  energy {\sf E}, defined in \eqref{energy}, is a strict Lyapunov functional.
\item[(c)] The set $\cE$ of non-trivial equilibria is given by \eqref{equilibria1}.
$\cE$  is locally a smooth manifold  of dimension $m\in\{1,2,3\}$,  with $m$ depending on the distribution of the  
central moments of inertia $\lambda_i$.
\item[(d)] The critical points of the energy with prescribed total momentum are precisely the equilibria of the system.
\item[(e)] If the energy with prescribed nonzero total momentum has a local minimum at a critical point $(0,\sa_*)$
then necessarily $\lambda_*=\max\{\lambda_1,\lambda_2,\lambda_3\}$.
\end{enumerate}
\end{proposition}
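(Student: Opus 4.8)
\smallskip\noindent\emph{Proof strategy.}
Assertions (a) and (b) have already been established above: (a) is \eqref{momentum-conserved}, (b) is the paragraph following Lemma~\ref{lem:energy_balance}, and (c) is \eqref{equilibria1} together with the local flatness observation made there. So what remains is to prove (d) and (e), both of which are statements about the constrained problem of finding the critical points of ${\sf E}$ on the manifold $\mathcal{M}_\beta := \{(v,\sa)\in L_{2,\sigma}(\Omega)\times\R^3 : |\II\sa|^2 = \beta^2\}$ for a fixed $\beta\ge 0$. Since the differential of $\sa\mapsto|\II\sa|^2$ at $\sa$ is $k\mapsto 2(\II^2\sa|k)$, which is nonzero whenever $\sa\ne0$, the set $\mathcal{M}_\beta$ is a smooth submanifold for every $\beta>0$, while $\mathcal{M}_0 = L_{2,\sigma}(\Omega)\times\{0\}$.

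\smallskip\noindent\emph{Part (d).}
First I would compute the first variation of ${\sf E}$: writing $\omega = \II^{-1}\int_\Omega x\times v\,dx$ and using that $\II$ is symmetric, one obtains $\partial_v{\sf E}(v,\sa)[h] = (v-\omega\times x\,|\,h)_{L_2(\Omega)}$ for $h\in L_{2,\sigma}(\Omega)$ and $\partial_\sa{\sf E}(v,\sa)[k] = (\II\sa|k)$ for $k\in\R^3$. Hence the Lagrange multiplier condition at a critical point of ${\sf E}|_{\mathcal{M}_\beta}$ reads $\PP(v-\omega\times x) = 0$ and $\II\sa = 2\kappa\,\II^2\sa$ for some $\kappa\in\R$. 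Testing the first identity against $v\in L_{2,\sigma}(\Omega)$ and using $(\omega\times x\,|\,v)_{L_2(\Omega)} = (\omega\,|\,\int_\Omega x\times v\,dx) = (\II\omega|\omega)$ yields $|v|_{L_2(\Omega)}^2 = (\II\omega|\omega)$, so $v=0$ by Lemma~\ref{lem:positive_def}, and consequently $\omega = 0$. Since $\II$ is invertible, the second identity becomes $(I-2\kappa\II)\sa = 0$; for $\beta>0$ this forces $\sa\ne0$, hence $2\kappa\lambda_\ast = 1$ for some $\lambda_\ast\in\{\lambda_1,\lambda_2,\lambda_3\}$ and $\sa\in{\sf N}(\lambda_\ast-\II)$, i.e.\ $(v,\sa)\in\cE$. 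Conversely, at any $(0,\sa_\ast)\in\cE$ one has $\omega = 0$, so $\partial_v{\sf E}$ vanishes identically, while $\partial_\sa{\sf E}[k] = \lambda_\ast(\sa_\ast|k) = \frac{1}{2\lambda_\ast}\,2(\II^2\sa_\ast|k)$; thus $(0,\sa_\ast)$ is a constrained critical point with multiplier $\kappa = 1/(2\lambda_\ast)$. (For $\beta = 0$ the same first-order argument produces only the trivial equilibrium $(0,0)$.) This proves (d).

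\smallskip\noindent\emph{Part (e).}
Suppose $(0,\sa_\ast)$, with $\sa_\ast\in{\sf N}(\lambda_\ast-\II)\setminus\{0\}$, is a local minimum of ${\sf E}$ on $\mathcal{M}_\beta$ for some $\beta>0$, and assume for contradiction that $\lambda_\ast<\max\{\lambda_1,\lambda_2,\lambda_3\}$. Choose an index $j$ with $\lambda_j>\lambda_\ast$; then ${\sf e}_j\notin{\sf N}(\lambda_\ast-\II)$, so ${\sf e}_j$ is orthogonal to $\II^2\sa_\ast\in{\sf N}(\lambda_\ast-\II)$, i.e.\ ${\sf e}_j$ is tangent at $\sa_\ast$ to the ellipsoid $\{\sa : |\II\sa|^2 = \beta^2\}$. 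Let $t\mapsto\sa(t)$ be a curve on that ellipsoid with $\sa(0) = \sa_\ast$, $\dot\sa(0) = {\sf e}_j$, so $t\mapsto(0,\sa(t))$ runs in $\mathcal{M}_\beta$. Along this curve ${\sf E}(0,\sa(t)) = \frac12(\II\sa(t)|\sa(t))$; differentiating $|\II\sa(t)|^2\equiv\beta^2$ twice to solve for $\ddot\sa(0)$ and using $\II\sa_\ast = \lambda_\ast\sa_\ast$, a short computation gives $\frac{d^2}{dt^2}\big|_{t=0}{\sf E}(0,\sa(t)) = \lambda_j - \lambda_j^2/\lambda_\ast = \lambda_j(\lambda_\ast-\lambda_j)/\lambda_\ast < 0$, while the first derivative vanishes since $\sa_\ast$ is a critical point. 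Hence ${\sf E}(0,\sa(t))<{\sf E}(0,\sa_\ast)$ for small $t\ne0$, contradicting the minimality of $(0,\sa_\ast)$; therefore $\lambda_\ast = \max\{\lambda_1,\lambda_2,\lambda_3\}$, which is (e).

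\smallskip\noindent\emph{Main obstacle.}
The only step that is not a routine Lagrange-multiplier manipulation is the implication $|v|_{L_2(\Omega)}^2 = (\II\omega|\omega)\Rightarrow v=0$ in (d), which is precisely where the strict inequality in Lemma~\ref{lem:positive_def} enters. In (e), the only subtlety is that $\lambda_\ast$ may be a repeated eigenvalue of $\II$, but the argument is unaffected provided the chosen index $j$ satisfies $\lambda_j\ne\lambda_\ast$ (equivalently ${\sf e}_j\notin{\sf N}(\lambda_\ast-\II)$).
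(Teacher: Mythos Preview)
Your proof is correct and follows exactly the approach the paper indicates: parts (a)--(c) are taken from the preceding discussion, and parts (d)--(e) are handled by the method of Lagrange multipliers, which is precisely what the paper's proof says (deferring the details to \cite[Proposition~2.3]{MaPrSi18}). You simply supply the explicit first- and second-order computations that the paper omits; the key nontrivial input, Lemma~\ref{lem:positive_def} for the implication $|v|_{L_2(\Omega)}^2=(\II\omega|\omega)\Rightarrow v=0$, is identified correctly.
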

\begin{proof}
The remaining assertions in (d) and (e) can be proved by the method of Lagrange multipliers, 
see also \cite[Proposition 2.3]{MaPrSi18}.
\end{proof}

\section{Local well-posedness and critical spaces}
\label{se:local well-posedness and critical spaces}
\noindent
In this section, we show that system \eqref{problem} is locally well-posed in an $L_q$-setting.
We consider the Banach spaces 
\begin{equation}
\label{X0-X1}
X_0:= L_{q,\sigma}(\Omega)\times \R^3,
\quad X_1:={_\parallel}H^2_{q,\sigma}(\Omega) \times \R^3,\quad 1<q<\infty.
\end{equation}
 Notice that $X_1$ is compactly embedded in $X_0$. 
System   \eqref{problem} can be rewritten in the following equivalent form 
\begin{equation}
\label{problem-2}
\begin{aligned}
\partial_t v + (\dot\sa -\dot\omega)\times x 
-\upnu \Delta v +\nabla p &=f(v,\sa) &&\text{in }  \Omega, \\
{\rm div}\, v &= 0 &&\text{in } \Omega, \\
(v|n) &=0 &&\text{on } \Gamma, \\
2\upnu \,P_\Gamma(D(v)n)+\zeta v&=0 &&\text{on } \Gamma,\\
\II\, \dot\sa &=g(v,\sa) &&\text{on }\R^3,\\
(v(0),\sa(0)) &= (v_0,\sa_0), \\
\end{aligned}
\end{equation}
where  $f(v,\sa)=-v\cdot\nabla v-2(\sa-\omega)\times v$ and $g(v,\sa)=-(\sa -\omega)\times \II \sa.$ 
Let 
\begin{equation}
\label{def-E-A_0}
\begin{aligned}
E(v,\sa) &:=\big(v+\PP\big(x\times \II^{-1}\int_\Omega(x\times v)\,dx) - \PP (x\times \sa), \II\sa\big), 
\quad (v,\sa) \in X_0,\\
A(v,\sa) &:=\big(-\upnu\PP\Delta v , 0\big),\qquad  (v,\sa)\in X_1.
\end{aligned}
\end{equation}
Then problem~\eqref{problem-2} can be reformulated as the following semilinear evolution equation in the unknown $u:=(v,\sa)$
\begin{equation}\label{eq:evolution}
\frac{d}{dt}u +  Lu=F(u),\quad u(0)=u_0.  
\end{equation}
Here $u_0:=(v_0,\sa_0)$, $L:=E^{-1}A$ and $F(u)=(F_1(u), F_2(u)):=E^{-1}(\PP f(v,\sa),g(v,\sa)),$
provided we know that $E$ is invertible. 
Consider the operator 
\begin{equation}
\label{finite-rank}
Kv:=\PP\big(x\times \II^{-1}\int_\Omega(x\times v)\,dx)=\sum_{i=1}^3 \langle \ell_i| v\rangle \PP (x\times {\sf e}_i),
\end{equation}
where $\ell_i$ are bounded linear functionals on $L_{q,\sigma}(\Omega)$.  
One readily verifies that $K$ has finite rank, and then it defines a compact linear operator on $L_{q,\sigma}(\Omega).$ 
In \cite{MaPrSi18}, it has been also proved that $(I+K)$ is invertible on $L_{q,\sigma}(\Omega).$  

Using well-known properties of the cross-product,  \eqref{def-omega} and Lemma~\ref {lem:positive_def}
one verifies that $(Kv|w)_{L_2(\Omega)} =(v|Kw)_{L_2(\Omega)}$ for $v,w\in L_{2,\sigma}(\Omega)$
and 
\begin{equation}
\label{I+K positive}
((I+K)v|v)_{L_2(\Omega)} 
=|v|^2_{L_2(\Omega)}-(\II\omega |\omega)\ge c|v|^2_{L_2(\Omega)},\quad v\in L_{2,\sigma}(\Omega).
\end{equation}
With $I+C:= (I+K)^{-1}$, these remarks lead to the following result (see also \cite{MaPrSi18}). 
\goodbreak
\begin{proposition}
\label{pro:E-invertible}
 $E$ is invertible on $X_0$. The inverse is given by
 \begin{equation*}\label{eq:E-1}
 E^{-1}=\left[\begin{array}{cc}
I+ C & (I+ C) \PP(x\times \II^{-1}\cdot )\\
      0 & \II^{-1}   
\end{array}\right],
 \end{equation*}
where  $I$ is the identity operator on $L_{q,\sigma}(\Omega)$, and where
$C$ has the following properties:
\begin{enumerate}
\setlength\itemsep{1mm}
\item[{\bf (a)}]
 C is a compact (in fact a finite-rank) operator on $L_{q,\sigma}(\Omega)$. 
= \item[{\bf (b)}]
 $I+C$ is invertible on $L_{q,\sigma}(\Omega)$ and  positive definite on $L_{2,\sigma}(\Omega)$. 
\end{enumerate} 
 \end{proposition}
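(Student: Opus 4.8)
The plan is to exploit the block upper-triangular structure of $E$. In terms of the finite-rank operator $K$ from \eqref{finite-rank} and the inertia tensor $\II=\mathrm{diag}[\lambda_1,\lambda_2,\lambda_3]$, the operator $E$ from \eqref{def-E-A_0} admits the matrix representation
\[
E=\left[\begin{array}{cc} I+K & -\,\PP(x\times\,\cdot\,) \\ 0 & \II \end{array}\right]\quad\text{on}\quad X_0=L_{q,\sigma}(\Omega)\times\R^3,
\]
where $\PP(x\times\,\cdot\,)\colon\R^3\to L_{q,\sigma}(\Omega)$, $\sa\mapsto\PP(x\times\sa)$, is bounded because $\Omega$ is bounded. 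As recalled before the statement, $I+K$ is invertible on $L_{q,\sigma}(\Omega)$, and $\II$ is clearly invertible; hence $E$ is invertible on $X_0$, and applying the standard inversion formula for a block triangular operator with invertible diagonal blocks produces precisely the claimed expression for $E^{-1}$, with $I+C:=(I+K)^{-1}$.

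For (a), I would write $C=(I+K)^{-1}-I=-(I+K)^{-1}K$. By \eqref{finite-rank} the range of $K$ is contained in $\mathrm{span}\{\PP(x\times{\sf e}_1),\PP(x\times{\sf e}_2),\PP(x\times{\sf e}_3)\}$, so $K$ has finite rank; composing with the bounded operator $(I+K)^{-1}$ keeps the range finite-dimensional, whence $C$ is a finite-rank (in particular compact) operator on $L_{q,\sigma}(\Omega)$.

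For (b), invertibility of $I+C$ on $L_{q,\sigma}(\Omega)$ is immediate from $I+C=(I+K)^{-1}$. For positive definiteness on $L_{2,\sigma}(\Omega)$, I would use that $K$ is symmetric on $L_{2,\sigma}(\Omega)$ (noted before \eqref{I+K positive}), hence so is $I+C=(I+K)^{-1}$; then for $w\in L_{2,\sigma}(\Omega)$, setting $v:=(I+K)^{-1}w$ and combining \eqref{I+K positive} with the elementary bound $|v|_{L_2(\Omega)}\ge |w|_{L_2(\Omega)}/\|I+K\|_{\cB(L_{2,\sigma}(\Omega))}$ yields
\[
((I+C)w\,|\,w)_{L_2(\Omega)}=((I+K)v\,|\,v)_{L_2(\Omega)}\ge c\,|v|^2_{L_2(\Omega)}\ge \frac{c}{\|I+K\|^2_{\cB(L_{2,\sigma}(\Omega))}}\,|w|^2_{L_2(\Omega)},
\]
which is the asserted positive definiteness.

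The argument is essentially bookkeeping with the block structure, so I do not anticipate a serious obstacle; the one step that demands a little care is the last one, namely transferring the coercivity lower bound \eqref{I+K positive} from $I+K$ to its inverse, which requires the a priori estimate $|(I+K)^{-1}w|_{L_2(\Omega)}\ge |w|_{L_2(\Omega)}/\|I+K\|_{\cB(L_{2,\sigma}(\Omega))}$ together with the symmetry of $K$ on $L_{2,\sigma}(\Omega)$. If desired, one may additionally remark that $C$ is independent of $q$, in the sense of consistency of the scale $L_{q,\sigma}(\Omega)$, which follows by a density argument.
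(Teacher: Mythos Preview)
Your proposal is correct and is precisely the argument the paper has in mind: the proposition is stated without a separate proof, the paper simply saying ``these remarks lead to the following result (see also \cite{MaPrSi18}),'' where ``these remarks'' are exactly the facts you use (finite rank of $K$, invertibility of $I+K$ on $L_{q,\sigma}(\Omega)$, symmetry of $K$ on $L_{2,\sigma}(\Omega)$, and the coercivity estimate \eqref{I+K positive}). Your explicit block-triangular inversion and the transfer of coercivity from $I+K$ to $(I+K)^{-1}$ via $v=(I+K)^{-1}w$ are the natural way to unpack those remarks; the paper later uses the equivalent representation $C=-K(I+K)^{-1}$ (see the proof of Proposition~\ref{pro:L-calculus}(b)), which agrees with your $-(I+K)^{-1}K$.
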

A moment of reflection shows that the operator $L$ in ~\eqref{eq:evolution} is given by
\begin{equation}  
\label{eq:L}
L=\left[\begin{array}{cc}
-\upnu (I+ C)\PP\Delta  &0\\
      0 & 0
      \end{array}\right].
\end{equation} 
Let $A_N$ be the Stokes operator with Navier boundary conditions, given by
\begin{equation}
\label{eq:stokes_domain}
{\sf D}(A_N)={_\parallel H}^2_{q,\sigma}(\Omega),\quad A_Nv= -\PP\Delta v\quad\text{for}\;\; v\in {\sf D}(A_N),
\end{equation}
see \eqref{H-2-parallel} for the definition of ${_\parallel H}^2_{q,\sigma}(\Omega)$. 
Then $A_N, (I+C)A_N: {\sf D}(A_N)\to L_{q,\sigma}(\Omega)$ enjoy the following properties.
\goodbreak
\begin{proposition}
\label{pro:L-calculus}
Let $1<q<\infty$. Then
\begin{enumerate}
\setlength\itemsep{1mm}
\item[{\bf (a)}] $A_N$ is invertible and has a bounded $\cH^\infty$-calculus with $\cH^\infty$-angle $\phi^\infty_{A_N}=0$. 
\item[{\bf (b)}]  $A_1=(I+C)A_N$ is invertible and has a bounded 
$\cH^\infty$-calculus with $\cH^\infty$-angle $\phi^\infty_{A_1}<\pi/2$.
\end{enumerate} 
\end{proposition}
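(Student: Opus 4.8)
The plan is to establish (a) first and then deduce (b) by a perturbation argument, since $A_1 = (I+C)A_N$ differs from $A_N$ by multiplication with the bounded, finite-rank perturbation $C$ of the identity.

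\emph{Step 1 (The $\cH^\infty$-calculus for $A_N$).} For the Stokes operator $A_N$ with Navier boundary conditions on a bounded $C^3$-domain, I would invoke the by-now standard theory: $A_N$ generates a bounded analytic $C_0$-semigroup on $L_{q,\sigma}(\Omega)$ and admits a bounded $\cH^\infty$-calculus. Concretely, one writes the resolvent problem $\lambda v - \upnu \PP\Delta v = \PP f$ together with ${\rm div}\,v = 0$, $(v|n)=0$, $2\upnu P_\Gamma(D(v)n)+\zeta v = 0$ on $\Gamma$, and shows $\cR$-sectoriality of the associated Stokes–Navier resolvent with angle $0$ via localization (freezing coefficients, reduction to the half-space and to whole space, where explicit Fourier-multiplier estimates apply), exactly as carried out in the references cited in the introduction for Navier conditions in fixed domains (\cite{PrSiWi18,PrWi17}). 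The bounded $\cH^\infty$-calculus with angle $\phi^\infty_{A_N}=0$ then follows from the maximal-regularity/$\cR$-sectoriality machinery (Kalton–Weis type transference), together with the observation that for the pure Stokes operator the multiplier symbols are smooth away from the origin and homogeneous, forcing the angle down to $0$. Invertibility of $A_N$ is a consequence of the Poincaré/Korn-type coercivity: the quadratic form $2\upnu|D(v)|_{L_2(\Omega)}^2 + \zeta|v|_{L_2(\Gamma)}^2$ is equivalent to $|v|_{H^1_2(\Omega)}^2$ on $L_{2,\sigma}(\Omega)$ when $\zeta>0$ (Korn + the trace argument used after Lemma~\ref{lem:energy_balance}), so $0\in\rho(A_N)$ in the $L_2$-case, and this propagates to all $q\in(1,\infty)$ since $A_N$ has compact resolvent and the kernel is $q$-independent.

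\emph{Step 2 (Transfer to $A_1 = (I+C)A_N$).} Since $I+C = (I+K)^{-1}$ is a bounded, boundedly invertible operator on $L_{q,\sigma}(\Omega)$ that is, moreover, positive definite and self-adjoint on $L_{2,\sigma}(\Omega)$ (Proposition~\ref{pro:E-invertible}(b)), I would argue that $A_1$ inherits a bounded $\cH^\infty$-calculus from $A_N$. The cleanest route: $C$ has \emph{finite rank} (see \eqref{finite-rank} and Proposition~\ref{pro:E-invertible}(a)), hence $CA_N$ is $A_N$-bounded with $A_N$-bound $0$ — indeed $C$ maps into the finite-dimensional span of $\PP(x\times{\sf e}_i)$, which lies in ${\sf D}(A_N)$, so $CA_N$ is bounded relative to any positive power of $A_N$ and is in fact lower-order. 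A relatively-bounded (with bound $<1$ after shifting, or simply relatively compact) perturbation of an operator admitting a bounded $\cH^\infty$-calculus again admits one, with $\cH^\infty$-angle arbitrarily close to that of the unperturbed operator on large sectors $\lambda+\Sigma_\phi$; combined with invertibility of $A_1$ (which holds because $I+C$ is invertible and $A_N$ is invertible, so $A_1^{-1} = A_N^{-1}(I+C)^{-1}$ is bounded), one upgrades the large-sector statement to a genuine sectorial bounded $\cH^\infty$-calculus on all of $\Sigma_\phi$ with $\phi<\pi/2$. Alternatively, on $L_{2,\sigma}$ one can use that $(I+C)^{1/2}A_N(I+C)^{1/2}$ is self-adjoint positive and similar to $A_1$, giving $\cH^\infty$-angle $0$ there, and then interpolate/perturb to general $q$ at the (harmless) cost of an angle $<\pi/2$.

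\emph{Main obstacle.} The genuinely technical point is Step~1 — establishing $\cR$-sectoriality (equivalently, maximal $L_p$-regularity) and the bounded $\cH^\infty$-calculus for the Stokes operator under Navier boundary conditions in a curved bounded domain with the precise boundary operator $2\upnu P_\Gamma(D(v)n)+\zeta v$. The curvature enters through the shape operator in $D(v)n$ after localization and must be absorbed as a lower-order perturbation; one also has to handle the non-standard pressure elimination compatible with the slip condition. I expect this to follow the scheme in \cite{PrSiWi18,PrWi17} essentially verbatim. The perturbation step to reach $A_1$ is, by comparison, soft: it only uses that $C$ is finite-rank and that $A_1$ is invertible, so the only care needed is to distinguish between a bounded $\cH^\infty$-calculus on a shifted sector (automatic from perturbation theory) and on the full sector through the origin (which needs the explicit invertibility of $A_1$, already in hand).
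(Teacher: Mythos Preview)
Your overall plan matches the paper's: part~(a) is settled by citing \cite[Theorem~4.1]{PrWi17} for the bounded $\cH^\infty$-calculus of $A_N$, together with a Korn-type energy argument in $L_2$ (plus $q$-independence of the spectrum via compact resolvent) for invertibility; part~(b) is obtained by writing $A_1=A_N+CA_N$ and treating $\mathsf{B}:=CA_N$ as a lower-order perturbation of $\mathsf{A}:=A_N$. Two of your intermediate claims in Step~2, however, are incorrect as stated and are precisely where the paper is more careful.

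First, the range of $C$ --- the span of the $\PP(x\times\mathsf{e}_i)$ --- does \emph{not} lie in $\mathsf{D}(A_N)={_\parallel H}^2_{q,\sigma}(\Omega)$: these fields are smooth and solenoidal but fail the Navier condition $2\upnu P_\Gamma(D(v)n)+\zeta v=0$ on $\Gamma$ (for $\Omega$ a ball, $\PP(x\times\mathsf{e}_i)=x\times\mathsf{e}_i$ has $D\equiv0$, so the condition reduces to $\zeta(x\times\mathsf{e}_i)=0$, which is false). What the paper uses instead is that the range lies in $C^\infty_\sigma(\overline\Omega)\subset H^s_{q,\sigma}(\Omega)=\mathsf{D}(A_N^{s/2})$ for $s\in(0,1/q)$, where no boundary condition enters; this yields $\mathsf{B}:\mathsf{D}(\mathsf{A})\to\mathsf{D}(\mathsf{A}^\alpha)$ bounded for some $\alpha>0$. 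Second, your blanket assertion that a relatively compact (or relatively bounded with bound $<1$) perturbation preserves a bounded $\cH^\infty$-calculus is not a valid general principle; such perturbations give sectoriality of a shift, but the $\cH^\infty$-calculus needs a quantitative lower-order gap of exactly the type above. The paper therefore proceeds in two stages: relative compactness of $\mathsf{B}$ together with the $L_2$-eigenvalue computation $\mu((I+K)v|v)_\Omega-(\PP\Delta v|v)_\Omega=0$ (obtained by premultiplying by $(I+K)=(I+C)^{-1}$) locates $\sigma(-A_1)\subset(-\infty,\mu_1]$ and gives sectoriality with angle $<\pi/2$; then the fractional smoothing $\mathsf{B}:\mathsf{D}(\mathsf{A})\to\mathsf{D}(\mathsf{A}^\alpha)$ is fed into \cite[Proposition~3.3.9]{PrSi16} to obtain the bounded $\cH^\infty$-calculus. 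Your $L_2$-similarity route via $(I+C)^{1/2}A_N(I+C)^{1/2}$ is correct on $L_{2,\sigma}$, but the vague ``interpolate to general $q$'' does not work (the $L_{q,\sigma}$ do not form an interpolation scale with both endpoints at $q=2$), so a direct $L_q$-argument as above is still required.
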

\begin{proof}
(a) As $A_N$ has compact resolvent, $\sigma(-A_N)$, the spectrum of $-A_N$, consists solely of eigenvalues of finite algebraic multiplicity.
Moreover, the spectrum is independent of $q$, and we may restrict ourselves to the case $q=2$.
Suppose that $\mu\in\C$ is an eigenvalue of $-A_N$ with associated eigenfunction $v$, that is, 
$\mu v + A_N v=0$. Taking the $L_2$-inner product of this relation with $\bar v$, the complex conjugate of $v$,
(for questions related to spectral theory we always consider the complexification of $X_0$ and $X_1$) yields
$$0= \mu |v|^2_\Omega - (\PP \Delta v | v)_\Omega= \mu |v|^2_\Omega + |D(v)|^2_\Omega +\zeta |v|^2_\Gamma.$$
This implies $\mu\in (-\infty, \mu_0)$ for some $\mu_0<0$. Therefore, $\sigma(-A_N)\subset (-\infty,\mu_0]$. In particular, 
$A_N$ is invertible.
We refer to \cite[Theorem 4.1]{PrWi17} for the remaining assertion. 

\medskip\noindent
{\bf(b)} 
We will first show that $A_1=(I+C)A_N$ is invertible and sectorial, with sectorial angle $\phi_{A_1}<\pi/2$.
As in the proof of part (a), it suffices to locate the eigenvalues of $A_1$ in $L_{2,\sigma}(\Omega).$

Suppose then that $(\mu +A_1)v=0$ for some $\mu\in\C$ and $v\in {\sf D}(A_N)$.
Applying $(I+C)^{-1}=(I+K)$ to this equation and then taking the $L_2(\Omega)$ inner product with $\bar v$ results in
$
\mu((I+K)v|v)_\Omega - (\PP \Delta v|v)_\Omega = 0.
$
We conclude that $\mu\in (-\infty,\mu_1]$ with $\mu_1<0$, as both $(I+K)$ and $-\PP\Delta$ are positive definite.
In particular, $A_1$ is invertible.

Let $ {\sf A}+ {\sf B}:=-\PP\Delta -C\PP\Delta $  with domain ${\sf D}(A_N)={_\parallel H}^2_{q,\sigma}(\Omega)$. 
By part (a),  ${\sf A}$ admits a bounded $\cH^\infty$-calculus with $\cH^\infty$-angle zero on $L_{q,\sigma}(\Omega)$.
Hence, it is sectorial with angle $\phi_{\sf A}=0$, while $B$ is a relative compact perturbation. It follows
from \cite[Lemma 3.1.7 and Corollary 3.1.6]{PrSi16} that there is a positive number $\eta_0$ such that $\eta_0 +{\sf A} +{\sf B}$ is invertible and  sectorial with spectral angle less than $\pi/2$.
Combining this result with the fact that $\sigma(-A_1)\subset (-\infty,\mu_1]$ implies that
$A_1$ is sectorial with sectorial angle $\phi_{A_1}<\pi/2$ as well.

Next we infer from the representation $C=-K(I+K)^{-1}$ that 
${\sf B}$ maps ${_\parallel H}^2_{q,\sigma}(\Omega)$ into  $C^\infty_\sigma(\bar\Omega)$. 
In particular,  ${\sf B}\in \cB({_\parallel H}^2_{q,\sigma}(\Omega),H^{s}_{q,\sigma}(\Omega))$  for $s\in (0,1/q)$, where the spaces $H^s_{q,\sigma}(\Omega)$ are introduced below. 
From \eqref{Hs-interpolation} and the fact that ${\sf A}$ has bounded imaginary powers, it follows that
\[
H^s_{q,\sigma}(\Omega)=[L_{q,\sigma}(\Omega), {_\parallel H}^2_{q,\sigma}(\Omega)]_{s/2} ={\sf D} (A^{s/2}),
\quad s\in (0,1/q).
\]
Hence ${\sf B}:{\sf D(A)}\to  {\sf D}({\sf A}^\alpha),$ with $\alpha=s/2$, is bounded.
Observing that ${\sf A}+{\sf B}$ is invertible and sectorial, 
we can now follow the proof of \cite[Proposition 3.3.9]{PrSi16} to infer that 
${\sf A} +{\sf B}\in \cH^\infty(L_{q,\sigma}(\Omega))$ 
with $ \cH^\infty$-angle $\phi^\infty_{{\sf A} +{\sf B}}<\pi/2$.
\end{proof}

For $s\in [0,2]$ and $q\in (1,\infty)$ we set 
${H}^s_{q,\sigma}(\Omega)=H^s_q(\Omega)^3\cap L_{q,\sigma}(\Omega)$ and
\[
\begin{split}
&{_\parallel}H^s_{q,\sigma}(\Omega)= 
\left\{\begin{aligned}
&{H}^s_{q,\sigma}(\Omega), && s\in [0,1+1/q),
\\
&[L_{q,\sigma}(\Omega),{_\parallel H}^2_{q,\sigma}(\Omega) ]_{1/2+1/2q},&& s=1+1/q,
\\
&\{v\in H^s_{q,\sigma}(\Omega):\, 2\upnu \,P_\Gamma(D(v)n)+\zeta v=0 \text{ on }\Gamma\},&& s>1+1/q,
\end{aligned}\right.
\end{split}
\]  
where $H^s_q(\Omega)$ denote the Bessel-potential spaces.
Then it holds that 
\begin{equation}
\label{Hs-interpolation}
{_\parallel H}^s_{q,\sigma}(\Omega)=[L_{q,\sigma}(\Omega), {_\parallel H}^2_{q,\sigma}(\Omega)]_{s/2}, \quad s\in (0,2),
\end{equation}
with $[\cdot ,\cdot ]_\theta$ the complex interpolation method, see~\cite[p. 750]{PrWi17}.

Moreover, for $s,q$ as above and $p\in (1,\infty)$, we set
$B^s_{qp,\sigma}(\Omega)=B^s_{qp}(\Omega)^3\cap L_{q,\sigma}(\Omega)$ and 
\begin{equation*}
{_\parallel}B^{s}_{qp,\sigma}(\Omega):=
\left\{ 
\begin{aligned}
&B^s_{qp,\sigma}(\Omega), &&s\in[0,1+1/q),
\\
&(L_{q}(\Omega),{_\parallel H}^2_{q,\sigma}(\Omega))_{1/2+1/2q,p}, &&s=1+1/q,
\\
&\{u\in  B^{s}_{qp,\sigma}(\Omega): 2\upnu \,P_\Gamma(D(v)n)+\zeta v=0 \text{ on }\Gamma\}, &&s>1+1/q,
\end{aligned}
\right.
\end{equation*}
with $B^s_{qp}(\Omega)$ being the Besov spaces.
In this case, the following interpolation result holds
\begin{equation}
\label{Bs-interpolation}
{_\parallel H}^s_{q,\sigma}(\Omega)=(L_{q,\sigma}(\Omega), {_\parallel H}^2_{q,\sigma}(\Omega))_{s/2,p}, \quad s\in (0,2),
\end{equation}
with $(\cdot|\cdot)_{\theta,p}$ the real interpolation method, see~\cite[p. 750]{PrWi17}.
Finally, we recall that
\begin{equation}
\label{BWH}
B^s_{22}(\Omega)= W^s_2(\Omega)=H^s_2(\Omega),
\end{equation}
where $W^s_q(\Omega)$ are the Sobolev-Slobodeckii spaces. 

 Well-posedness of \eqref{eq:evolution} will be established in the following {\em time-weighted spaces}
\begin{equation}
\label{E1-mu}
\EE_{1,\mu}(0,T):=H^1_{p,\mu}((0,T);X_0)\cap L_{p,\mu}((0,T);X_1),
\end{equation}
where, for $T\in (0,\infty)$, $1/p<\mu\le 1$, and $X$ a Banach space,
\begin{equation*}
\begin{aligned}
&u\in L_{p,\mu}((0,T);X) && \Leftrightarrow \quad  t^{1-\mu}u\in L_p((0,T);X),\\
&u\in H^1_{p,\mu}((0,T);X) && \Leftrightarrow \quad u,\dot u\in L_{p,\mu}((0,T);X).
 \end{aligned}
\end{equation*}
We are now ready for our main result on existence and uniqueness of strong solutions for problem \eqref{problem}, or equivalently, 
problem~\eqref{eq:evolution}.
 
\begin{theorem}\label{th:local_strong}
Suppose
\begin{equation}
\label{assumptions-pq}
p\in(1,\infty),\quad q\in (1,3),\quad  2/p +3/q\le 3,
\end{equation}
and let $\mu$ satisfy
\begin{equation}
\label{assumptions-mu}
\mu\in (1/p,1],\quad  \mu\ge \mu_{\rm crit}=\frac{1}{p} + \frac{3}{2q}-\frac{1}{2}. 
\end{equation}
\begin{enumerate}
\setlength\itemsep{1mm}
\item[{\bf (a)}]
Let  $u_0=(v_0,\sa_0)\in {_\parallel}B^{2\mu-2/p}_{qp,\sigma}(\Omega)\times \R^3=X_{\gamma,\mu}$ be given.
Then there are positive constants $T=T(u_0)$ and $\eta=\eta(u_0)$ such that
\eqref{eq:evolution} admits a unique solution $u(\cdot, u_0)=(v,{\sf a})$ in 
 \begin{equation*}
 \qquad
 {\EE_{1,\mu}(0,T)}=H^1_{p,\mu}((0,T); L_{q,\sigma}(\Omega)\times\R^3)
 \cap L_{p,\mu}((0,T); {_\parallel H}^2_{q,\sigma}(\Omega)\times\R^3)
 \end{equation*}
 for any initial value $u_1=(v_1,\sa_1)\in B_{X_{\gamma,\mu}}(u_0,\eta)$. 
 Furthermore, there exists a positive constant $c=c(u_0)$ such that 
\begin{equation}
\label{eq:continuous_dep}
\|u(\cdot, u_1)-u(\cdot,u_2)\|_{\EE_{1,\mu}(0,T)}\le c|u_1-u_2|_{X_{\gamma,\mu}}
\end{equation}
for all $u_i=(v_i, \sa_i)\in B_{X_{\gamma,\mu}}(u_0,\eta)$, $i=1,2$. 
\item[{\bf(b)}]
Suppose $p_j, q_j$, $\mu_j$ satisfy \eqref{assumptions-pq}-\eqref{assumptions-mu}  and, in addition, $p_1\leq p_2$, $q_1\leq q_2$ as well as
\begin{equation}\label{mu-j}
 \mu_1- \frac{1}{p_1}- \frac{3}{2q_1} \ge  \mu_2- \frac{1}{p_2}- \frac{3}{2q_2}.
 \end{equation}
Then for each initial value
$(v_0,{\sf a}_0)\in {_\parallel}B^{2\mu_1 -2/p_1}_{q_1 p_1,\sigma}(\Omega)\times \R^3,$ 
problem  \eqref{eq:evolution} admits a unique solution $(v,{\sf a})$ in the class
\begin{equation*}
\begin{split}
&H^1_{p_1,\mu_1}((0,T); L_{q_1,\sigma}(\Omega)\times\R^3)\cap L_{p_1,\mu_1}((0,T); {_\parallel H}^2_{q_1,\sigma}(\Omega)\times\R^3) \\
&\cap H^1_{p_2,\mu_2}((0,T); L_{q_2,\sigma}(\Omega)\times\R^3)\cap L_{p_2,\mu_2}((0,T); {_\parallel H}^2_{q_2,\sigma}(\Omega)\times\R^3).
\end{split}
\end{equation*}
\item[{\bf (c)}]
Each solution with initial value in  $(v_0,\sa_0)\in {_\parallel}B^{2\mu-2/p}_{qp,\sigma}(\Omega)\times \R^3$
exists on a maximal interval  $[0,t_+)=[0,t_+(v_0,\sa_0))$, and 
enjoys the additional regularity property
\begin{equation*}
\qquad \qquad v \in C([0,t_+); {_\parallel}B^{2\mu-2/p}_{qp,\sigma}(\Omega))\cap C((0,t_+);{_\parallel}B^{2-2/p}_{qp,\sigma}(\Omega)),
\;\; \sa\in C^1([0,t_+),\R^3).
\end{equation*}
Hence, $v$ regularizes instantaneously if $\mu<1$.
\end{enumerate}
\end{theorem}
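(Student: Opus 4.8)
The plan is to recast problem \eqref{eq:evolution} as an abstract semilinear parabolic equation with the generator $L$ given in \eqref{eq:L} and the bilinear nonlinearity $F$, and then to apply the general maximal-regularity theory for semilinear evolution equations in time-weighted spaces from \cite{PrSi16} (in particular the local existence, uniqueness, continuous dependence, and regularization results there). First I would verify the structural hypotheses: by Proposition \ref{pro:E-invertible}, $E$ is invertible on $X_0$, so $L=E^{-1}A$ is well-defined; by Proposition \ref{pro:L-calculus}(b) the leading part $A_1=(I+C)A_N$ of $L$ is invertible and admits a bounded $\cH^\infty$-calculus with angle $<\pi/2$, hence $L$ (which differs from $\mathrm{diag}(A_1,0)$ only by a bounded operator on the finite-dimensional factor $\R^3$, coming from the $\PP(x\times\dot{\sf a})$ coupling absorbed into $E^{-1}$) has maximal $L_p$-regularity, and consequently maximal $L_{p,\mu}$-regularity for $1/p<\mu\le 1$ by the standard transference result for time weights. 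The trace space of $\EE_{1,\mu}(0,T)$ at $t=0$ is then $X_{\gamma,\mu}=(X_0,X_1)_{\mu-1/p,p}={_\parallel}B^{2\mu-2/p}_{qp,\sigma}(\Omega)\times\R^3$ by \eqref{Bs-interpolation}.

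Next I would check the nonlinearity estimates. Writing $F(u)=E^{-1}(\PP f(v,{\sf a}),g(v,{\sf a}))$ with $f(v,{\sf a})=-v\cdot\nabla v-2({\sf a}-\omega)\times v$ and $g(v,{\sf a})=-({\sf a}-\omega)\times\II{\sf a}$, the key point is a bound of the form $F:X_\beta\to X_0$ being bounded and (locally) Lipschitz for a suitable $\beta\in(\mu-1/p,1)$, together with the criticality relation that pins down $\mu_{\rm crit}$. The dominant term is the convective term $v\cdot\nabla v$: using $\PP$ bounded on $L_{q,\sigma}$, one needs $v\cdot\nabla v\in L_q$ whenever $v\in {_\parallel}B^{2\beta-2/p}_{qp,\sigma}$, which by Sobolev embedding and a Hölder split requires $2\beta-2/p>1+3/q-3/q$... more precisely the product estimate $\|v\cdot\nabla v\|_{L_q}\lesssim \|v\|_{H^{s}_q}\|v\|_{H^{1}_{r}}$ with the embeddings forcing $2\beta - 2/p \ge 1/p+3/(2q)-1/2+\text{(margin)}$; taking the borderline gives exactly $\mu_{\rm crit}=1/p+3/(2q)-1/2$ and the constraints \eqref{assumptions-pq} ($q<3$ and $2/p+3/q\le3$) are precisely what make $X_{\gamma,\mu_{\rm crit}}$ an admissible (critical) trace space. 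The lower-order terms $({\sf a}-\omega)\times v$ and $({\sf a}-\omega)\times\II{\sf a}$ are easier: $\omega=\II^{-1}\int_\Omega x\times v\,dx$ is a bounded linear functional of $v\in L_q$, and these terms are bilinear in $({\sf a},v)$ hence trivially locally Lipschitz from $X_\beta$ (even from $X_0\times\R^3$) with the required mapping properties. Once these estimates are in place, part (a)—local existence, uniqueness in $\EE_{1,\mu}(0,T)$, and the Lipschitz estimate \eqref{eq:continuous_dep}—follows verbatim from the abstract theorem (e.g. \cite[Theorem 3.5.1 or Corollary 3.5.2]{PrSi16}), and the continuation to a maximal interval $[0,t_+)$ with $v\in C([0,t_+);X_{\gamma,\mu})$ is the standard maximal-solution statement.

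For part (b), the two-space solution class, I would invoke the parameter-dependent version of the abstract result: condition \eqref{mu-j} together with $p_1\le p_2$, $q_1\le q_2$ guarantees the chain of embeddings $X_{\gamma,\mu_1}^{(q_1,p_1)}\hookrightarrow X_{\gamma,\mu_2}^{(q_2,p_2)}$ and the compatibility of the two maximal-regularity scales, so the solution constructed in the first class automatically lies in the intersection; this is again a direct citation of the relevant proposition in \cite{PrSi16} once the nonlinearity has been checked in both scales, which is covered by the same Sobolev bookkeeping. For part (c), the instantaneous regularization $v\in C((0,t_+);{_\parallel}B^{2-2/p}_{qp,\sigma}(\Omega))$ when $\mu<1$ is the parabolic smoothing property of solutions in time-weighted maximal regularity spaces (\cite[Theorem 3.5.1]{PrSi16}, \cite{PrSiWi18}): one reuses the solution at a time $t_0\in(0,t_+)$ as new initial datum in the endpoint weight $\mu=1$ and applies part (a), using that $X_{\gamma,\mu}\hookrightarrow X_{\gamma,1}$ fails but the solution map sends $L_{p,\mu}$-data into $C((0,T];X_{\gamma,1})$; the $C^1$-regularity of ${\sf a}$ is immediate from $\dot{\sf a}=\II^{-1}g(v,{\sf a})$ being continuous in $t$. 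The main obstacle I anticipate is the sharp nonlinearity estimate for the convective term that yields precisely the critical weight $\mu_{\rm crit}$ and makes the constraints \eqref{assumptions-pq} exactly the admissibility conditions—everything else is an application of the abstract machinery, but getting the Hölder/Sobolev exponents to match the claimed $\mu_{\rm crit}$ (and verifying $X_{\gamma,\mu_{\rm crit}}$ is a space of functions, not just distributions, precisely when $2/p+3/q\le3$) requires careful interpolation bookkeeping with the Navier-boundary-condition spaces ${_\parallel}H^s_{q,\sigma}$ defined above.
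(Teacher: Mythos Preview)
Your proposal is correct and follows essentially the same strategy as the paper: verify that $L$ has maximal $L_{p,\mu}$-regularity via the bounded $\cH^\infty$-calculus of Proposition~\ref{pro:L-calculus}(b), estimate the bilinear nonlinearity $F$ on the interpolation scale to extract the critical weight $\mu_{\rm crit}$, and then invoke the abstract semilinear theory in time-weighted spaces from \cite{PrSi16,PrSiWi18}. The paper's own proof is far more economical: it simply notes that, once Proposition~\ref{pro:L-calculus} is in hand, the operator $L$ enjoys the \emph{same} abstract properties as the corresponding operator in the no-slip case treated in \cite{MaPrSi18}, and that the nonlinearity $F$ is \emph{literally identical} (the boundary condition does not enter $F$); hence the entire proof of \cite[Theorem~3.4]{MaPrSi18} carries over verbatim, and no fresh bookkeeping is required here.
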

\begin{proof}
By Proposition~\ref{pro:L-calculus}, the operator $L$ in \eqref{eq:evolution} has the same properties as 
the corresponding operator $L$ in \cite{MaPrSi18} 
(which is generated by the Stokes operator with no-slip boundary conditions), while the nonlinearity
in \eqref{eq:evolution} is identical to the one in  \cite{MaPrSi18}.
Therefore, the proof of \cite[Theorem 3.4]{MaPrSi18} carries over to the current situation.
\end{proof} 
\begin{remarks}\label{remark-2}
{\rm {\bf (a)}
It has been shown in~\cite{PrSiWi18} that the concept of \textit{critical weight} $\mu_{\rm crit}$ and \textit{critical space} 
$X_{\rm {crit}}:=X_{\mu_{\rm crit}-1/p}$
 captures and unifies
the idea of `largest space for well-posedness,' and `scaling invariant space.' In more detail, it has been shown in~\cite{PrSiWi18} that
 $X_{\rm {crit}}$ is, in a generic sense, the largest space of initial data for which the given equation is $L_{p,\mu}$-well-posed, and 
that   $X_{\rm {crit}}$ is scaling invariant, provided the given equation has this property.

We note that the case of  $p=q=2$ is permissible in Theorem~\ref{th:local_strong} and yields
$\mu_{\rm crit}=3/4$. Hence
\[
X_{\rm {crit}}=(L_{2,\sigma}(\Omega), {_\parallel H}^2_{2,\sigma}(\Omega))_{1/4,2}= {_\parallel H}^{1/2}_{2,\sigma}(\Omega),
\]
 reminiscent of the celebrated Fujita-Kato Theorem \cite{FuKa62} for the Navier-Stokes~system.
\goodbreak
\noindent
{\bf (b)}
Let $\mu = 1/p+3/2q-1/4$ in Theorem \ref{th:local_strong}.
Then $1/p < \mu\le 1$ yields the restrictions
\begin{equation}
\label{mu-H1}
2/p+3/q\le5/2\quad \text{and}\quad q<6. 
\end{equation}
Hence, Theorem \ref{th:local_strong}(a) and (c) still holds under these assumptions. 
In order to see this, it suffices to show that there exists $\beta\in (\mu-1/p,1)$ such that the bilinear mapping
$$G:  H^{2\beta}_q(\Omega)^3\times H^{2\beta}_q(\Omega)^3 
\to L_q(\Omega)^3, \quad (v_1,v_2)\mapsto v_1\cdot \nabla v_2,$$
is continuous. By H\"older's inequality and Sobolev embedding we have
$$ |v_1\cdot \nabla v_2|_{L_q(\Omega)}\le |v_1|_{L_{qr}(\Omega)} |v_2|_{H^1_{qr^\prime}(\Omega)}
\le c |v_1|_{H^{2\beta}_q(\Omega)} |v_2|_{H^{2\beta}_q(\Omega)},$$
provided 
\begin{equation}
\label{eq:embeddingqrr}
3/q- 3/qr\le 2\beta, \quad1+3/qr\le 2\beta,
\end{equation}
respectively. 
Choose $r = 3$. Then for  any  $q$ satisfying $3/q<5/2$  there is  $\beta\in (\mu-1/p,1)$ such that \eqref{eq:embeddingqrr} holds.
Indeed, the restriction $3/q<5/2$, which is already covered by~\eqref{mu-H1}, ensures that $\mu-1/p<1$.
Since we can choose $\beta$ as close to 1 as we wish, we only need to require
\[
3/q- 3/qr< 2, \quad1+3/qr< 2,
\]
which holds for any $q>1$  in case $r=3$. 
Note that adding the two inequalities in \eqref{eq:embeddingqrr} yields $1 + 3/q \le 4\beta $. 
\smallskip\\
\noindent
{\bf (c)}
Theorem~\ref{th:local_strong}(b) and Remark (b) assert that problem \eqref{eq:evolution} admits for each initial value
$$(v_0,{\sf a_0})\in {H}^1_{2,\sigma}(\Omega)\times\R^3$$
a unique solution in the class 
\begin{equation*}
\begin{split}
&H^1_{2}((0,T); L_{2,\sigma}(\Omega)\times\R^3)\cap L_{2}((0,T); {_\parallel H}^2_{2,\sigma}(\Omega)\times\R^3) \\
&\cap H^1_{p,\mu}((0,T); L_{q,\sigma}(\Omega)\times\R^3)\cap L_{p,\mu}((0,T); {_\parallel H}^2_{q,\sigma}(\Omega)\times\R^3),
\end{split}
\end{equation*}
for any $p\ge 2$, $q\in [2,6)$, with $\mu=1/p +3/2q-1/4$.
In particular, we can conclude that $v\in C((0,t_+); B^{2-2/p}_{qp}(\Omega))$ for any $p\ge 2$, $q\in [2,6).$ 
In this class of solutions Lemma \ref{lem:energy_balance} holds.

Let us take the $L_2$-inner product of \eqref{problem}$_1$ with $\partial_t v$. Using \eqref{problem}$_{2,3,4}$ and 
Lemma~\ref{lem:positive_def} together with Young's inequality, we find 
\[\begin{split}
\frac 12 \frac{d}{dt}\left[2\upnu |D(v)|^2_{L_2(\Omega)}+\zeta|v|^2_{L_2(\Gamma)}\right]&+\frac c2|\partial_t v|^2_{L_2(\Omega)}
\\
\le c_1&\left[|\dot\sa|^2+(|\sa|^2+|\omega|^2)|v|^2_{L_2(\Omega)}+|v\cdot\nabla v|^2_{L_2(\Omega)}\right].
\end{split}\]
Now consider the Helmholtz projection of \eqref{problem}$_1$. Using H\"older and Young inequalities, we obtain 
\[
\upnu|\PP \Delta v|^2_{L_2(\Omega)}\le c_2\left[ |\partial_t v|^2_{L_2(\Omega)}+|\dot\sa|^2+
(|\sa|^2+|\omega|^2)|v|^2_{L_2(\Omega)}+|v\cdot\nabla v|^2_{L_2(\Omega)}\right].
\]
From the latter two displayed inequalities  and 
\[
|v|_{H^2_2(\Omega)}\le C_1|\PP \Delta v|_{L_2(\Omega)} ,\]
which is a consequence of  Proposition~\ref{pro:L-calculus}(a), it follows that  
\[\begin{split}
\frac 12 \frac{d}{dt}\left[2\upnu |D(v)|^2_{L_2(\Omega)}+\zeta|v|^2_{L_2(\Gamma)}\right]&+\frac c4 |\partial_t v|^2_{L_2(\Omega)}
+c_3|v|^2_{H^2_2(\Omega)}
\\
\le c_4&\left[|\dot\sa|^2+(|\sa|^2+|\omega|^2+1)|v|^2_{L_2(\Omega)}+|v\cdot\nabla v|^2_{L_2(\Omega)}\right].
\end{split}\]
Let us estimate the right-hand side of the above inequality. By \eqref{problem}$_5$ and Lemma \ref{lem:energy_balance}, we get 
\[
|\dot\sa|^2+(|\sa|^2+|\omega|^2+1)|v|^2_{L_2(\Omega)}\le C_3
\]
where $C_3$ is a positive constant depending only on initial data and physical and geometric properties of $\mathcal S.$ Moreover, by H\"older's inequality, Sobolev embedding and the interpolation inequality together with Young's inequality, we find 
\[\begin{split}
|v\cdot\nabla v|^2_{L_2(\Omega)}&\le |v|^2_{L_6(\Omega)}|\nabla v|^2_{L_3(\Omega)}\le C_4 |v|^2_{H^1_2(\Omega)}|\nabla v|^2_{L_3(\Omega)}
\\
&\le C_4|v|^2_{H^1_2(\Omega)}|\nabla v|_{L_2(\Omega)}|\nabla v|_{L_6(\Omega)}
\le C_5|v|^3_{H^1_2(\Omega)}|v|_{H^2_2(\Omega)}
\\
&\le C_6 |v|^6_{H^1_2(\Omega)}+\frac{c_3}{2c_4}|v|^2_{H^2_2(\Omega)}. 
\end{split}\]
Using \eqref{eq:korn}, we conclude that 
\begin{equation}\label{eq:grad_estimate}\begin{split}
\frac 12 \frac{d}{dt}\left[2\upnu |D(v)|^2_{L_2(\Omega)}+\zeta|v|^2_{L_2(\Gamma)}\right]&+\frac c4 |\partial_t v|^2_{L_2(\Omega)}
+\frac{c_3}{2}|v|^2_{H^2_2(\Omega)}
\\
&\le c_5[8\upnu^3 |D(v)|^6_{L_2(\Omega)}+\zeta^3|v|^6_{L_2(\Gamma)}+1].
\end{split}\end{equation}
This differential inequality implies the following blow-up criterion: either $t_+=\infty$ or else, if $t_+<\infty$ then 
\[
\lim_{t\to t_+^-}\left[2\upnu |D(v)|^2_{L_2(\Omega)}+\zeta|v|^2_{L_2(\Gamma)}\right]=\infty. 
\]}
\end{remarks}  

%
\section{Existence of global weak solutions and related properties}\label{sec:weak}
In this section we will show that the class of weak solutions {\em \`a la Leray-Hopf} of \eqref{problem} is nonempty. Such solutions are global in time for data having finite initial kinetic energy, and they possess the further property of becoming ``regular'' (and unique) after a sufficiently large time. After this time, the equations of motion are satisfied a.e. in space and time.  

Let us recall the following Friedrichs and Korn type inequalities. 
\begin{lemma}\label{lem:FK}
There exists a constant $0<K_1<1$ such that
\begin{equation}\label{eq:korn}
K_1|u|_{H^1_q(\Omega)}\le\left(|u|_{L_q(\Gamma)}^q+|D(u)|^q_{L_q(\Omega)}\right)^{1/q}
\end{equation}
for all $u\in H^1_q(\Omega)^3$. 

Moreover, there exists a positive constant $K_2$ such that, for every $v\in H^1_2(\Omega)^3$ such that $(v|n)=0$ and 
$2\upnu P_\Gamma(D(v)n)+\zeta v=0$ on $\Gamma$, 
\begin{equation}\label{eq:friedrichs-korn}
|v|_{H^1_2(\Omega)}\le K_2 |D(v)|_{L_2(\Omega)}. 
\end{equation}
\end{lemma}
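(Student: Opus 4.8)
\textbf{Proof proposal for Lemma~\ref{lem:FK}.}

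The plan is to treat the two inequalities separately, since they are of different character: \eqref{eq:korn} is a ``Korn's second inequality with a boundary trace term'' valid for all $u\in H^1_q(\Omega)^3$ without any boundary condition, while \eqref{eq:friedrichs-korn} is a genuine Poincar\'e--Korn estimate that exploits the specific Navier boundary conditions to kill the kernel. For \eqref{eq:korn}, the first step is to recall the standard Korn second inequality on the bounded $C^1$ (here $C^3$) domain $\Omega$, namely $|u|_{H^1_q(\Omega)}\le C\big(|u|_{L_q(\Omega)}+|D(u)|_{L_q(\Omega)}\big)$ for $1<q<\infty$. The task is then to replace the full $L_q(\Omega)$-norm of $u$ on the right by the boundary trace $|u|_{L_q(\Gamma)}$ plus $|D(u)|_{L_q(\Omega)}$. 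I would argue by contradiction and compactness: if no such $K_1$ existed, there would be a sequence $u_k$ with $|u_k|_{H^1_q(\Omega)}=1$ but $|u_k|_{L_q(\Gamma)}^q+|D(u_k)|_{L_q(\Omega)}^q\to 0$. By reflexivity and the compact embeddings $H^1_q(\Omega)\hookrightarrow\hookrightarrow L_q(\Omega)$ and $H^1_q(\Omega)\to L_q(\Gamma)$ (trace), pass to a subsequence with $u_k\rightharpoonup u$ in $H^1_q$, $u_k\to u$ in $L_q(\Omega)$ and in $L_q(\Gamma)$. Then $D(u)=0$, so $u$ is an infinitesimal rigid displacement $u(x)=b+\beta\times x$; but also $u|_\Gamma=0$, and a rigid displacement vanishing on a set of positive surface measure (indeed on all of $\partial\Omega$) must be identically zero, so $u=0$. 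Finally, from $D(u_k)\to 0$ in $L_q$ and $u_k\to 0$ in $L_q(\Omega)$, Korn's second inequality forces $u_k\to 0$ in $H^1_q(\Omega)$, contradicting $|u_k|_{H^1_q}=1$.

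For \eqref{eq:friedrichs-korn} I would use essentially the same contradiction/compactness scheme, now in the Hilbert setting $q=2$ and over the closed subspace $V:=\{v\in H^1_2(\Omega)^3:\ (v|n)=0,\ 2\upnu P_\Gamma(D(v)n)+\zeta v=0\text{ on }\Gamma\}$. Suppose there is a sequence $v_k\in V$ with $|v_k|_{H^1_2(\Omega)}=1$ and $|D(v_k)|_{L_2(\Omega)}\to 0$. Extract a subsequence $v_k\rightharpoonup v$ in $H^1_2$, $v_k\to v$ in $L_2(\Omega)$ and in $L_2(\Gamma)$; weak closedness of the boundary conditions (the trace and $P_\Gamma D(\cdot)n$ maps are weakly continuous into the appropriate boundary spaces) gives $v\in V$, and $D(v)=0$ gives $v(x)=b+\beta\times x$. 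The boundary conditions now pin $v$ down: from $2\upnu P_\Gamma(D(v)n)+\zeta v=0$ and $D(v)=0$ we get $\zeta P_\Gamma v=0$, i.e. $P_\Gamma v=0$ on $\Gamma$ (using $\zeta>0$), and combined with $(v|n)=0$ this yields $v=0$ on $\Gamma$; as before a rigid displacement vanishing on $\partial\Omega$ is zero, so $v\equiv0$. Then Korn's second inequality (or \eqref{eq:korn} just proved with $q=2$, using $v_k|_\Gamma\to 0$) together with $v_k\to 0$ in $L_2(\Omega)$ and $D(v_k)\to0$ forces $v_k\to 0$ in $H^1_2(\Omega)$, contradicting $|v_k|_{H^1_2}=1$. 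One could alternatively bypass a separate argument here and simply note that \eqref{eq:friedrichs-korn} is an immediate consequence of \eqref{eq:korn} (with $q=2$) once one shows $|v|_{L_2(\Gamma)}\le C|D(v)|_{L_2(\Omega)}$ on $V$, which is itself the same compactness argument; I would present it directly as above to keep things self-contained.

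The main obstacle I anticipate is not any single hard estimate but making the compactness arguments clean: specifically, justifying that the limiting field inherits the boundary conditions under only weak $H^1$ convergence (this needs the continuity of the trace operator $H^1_2(\Omega)\to L_2(\Gamma)^3$ and of $v\mapsto P_\Gamma(D(v)n)$ from $H^1_2$ into $H^{-1/2}_2(\Gamma)$ or a similar boundary space, which is where the $C^3$ regularity of $\Gamma$ and $1<q<\infty$ enter), and correctly identifying that $D(v)=0$ plus vanishing boundary trace forces $v=0$ — the elementary but essential fact that the only infinitesimal rigid motion vanishing on $\partial\Omega$ (a set with nonempty interior in the surface, hence not contained in any affine line) is the trivial one. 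Everything else (Korn's second inequality on bounded Lipschitz/$C^1$ domains, the Rellich and trace compact embeddings) is classical and may be quoted. A minor additional point is to confirm the constants can be taken in $(0,1)$: for \eqref{eq:korn} this is just a normalization (shrink $K_1$ if necessary, and note the right-hand side is at most a constant times $|u|_{H^1_q}$ so $K_1<1$ is consistent), and no smallness beyond positivity is claimed for $K_2$.
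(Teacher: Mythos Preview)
Your argument is correct, and both halves run a standard compactness/contradiction scheme; the paper takes a different route. For \eqref{eq:korn} the paper gives no argument at all, simply citing \cite[Equation~(26)]{SoSc73}. For \eqref{eq:friedrichs-korn} the paper factors the estimate into two pieces: Friedrichs' inequality $|v|_{L_2(\Omega)}\le c_1|\nabla v|_{L_2(\Omega)}$, valid for every $v\in H^1_2(\Omega)^3$ with $(v|n)=0$ on $\Gamma$ (quoted from \cite{Ga}), composed with the Korn-type bound $|\nabla v|_{L_2(\Omega)}\le c_2|D(v)|_{L_2(\Omega)}$ (quoted from \cite[Lemma~4]{SoSc73}). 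The paper then stresses that this second bound already holds on $\{v:(v|n)=0\}$ whenever $\Omega$ is \emph{not} a solid of revolution, and that the Navier condition with $\zeta>0$ is invoked precisely to exclude the axial rigid rotation $\omega_0\times x$ in the revolution case. Your direct argument absorbs this geometric dichotomy implicitly: the step ``$D(v)=0$ plus Navier $\Rightarrow v|_\Gamma=0$'' kills the rigid kernel uniformly, regardless of the symmetry of $\Omega$. What you gain is self-containment (no reliance on \cite{SoSc73}); what the paper's decomposition gains is that it makes the geometric role of the slip condition explicit and, by working with $|\nabla v|$ rather than a boundary trace of $D(v)n$, avoids having to pass the Navier relation through a weak $H^1$-limit---exactly the technical wrinkle you correctly flag as the main obstacle in your approach.
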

\begin{proof} 
The proof of \eqref{eq:korn} can be found in \cite[Equation (26)]{SoSc73}.  
The proof of \eqref{eq:friedrichs-korn} follows from two important inequalities. Friedrichs' inequality (\cite[Exercise II.5.15]{Ga})
\[
|v|_{L_2(\Omega)}\le c_1 |\nabla v|_{L_2(\Omega)}
\]
holds for every $v\in H^1_{2}(\Omega)^3$ such that $(v|n)=0$ on $\Gamma$. Korn's inequality 
\[
|\nabla v|_{L_2(\Omega)}\le c_2|D(v)|_{L_2(\Omega)}
\] 
is satisfied for every $v\in H^1_{2}(\Omega)^3$ such that $(v|n)=0$ and $2\upnu P_\Gamma(D(v)n)+\zeta v=0$ on $\Gamma$, see \cite[Lemma 4]{SoSc73}. We wish to emphasize that, as remarked by the authors in \cite[Remark 2]{SoSc73}, this latter inequality holds for every $v\in H^1_{2}(\Omega)^3$ such that $(v|n)=0$ on $\Gamma$ if $\Omega$ is not a solid of revolution around a vector $\omega_0$, otherwise it holds for every $v$ belonging to any subspace of $H^1_2(\Omega)^3$ not containing $\omega_0$ and such that $(v|n)=0$ on $\Gamma$. In our case, the boundary condition $2\upnu P_\Gamma(D(v)n)+\zeta v=0$ on $\Gamma$ ensures the validity of \eqref{eq:friedrichs-korn} also in the case $\Omega$ is  a solid of revolution around a vector $\omega_0$. 
\end{proof}

Consider the bilinear  form \[
b(u,v):=\zeta(u|v)_\Gamma+2\upnu (D(u)|D(v))_\Omega
\quad \text{for every  $u,v\in H^1_2(\Omega)^3$. }
\]
In particular, for every $v\in H^1_2(\Omega)^3$ 
\[
b(v,v)=\zeta|v|^2_{L_2(\Gamma)}+2\upnu |D(v)|^2_{L_2(\Omega)}.
\]
Hence, $b(v,v)=0$ iff $v=0$ on $\Gamma$ and $D(v)=0$ in $\Omega$. We can then infer that $b(\cdot,\cdot)$ with domain  $H^1_2(\Omega)^3\times H^1_2(\Omega)^3$ is a positive definite bilinear form. 
A simple integration by parts shows that $b(u,v)=(A_N u|v)_\Omega$ for every $u\in {_\parallel}H^2_{2,\sigma}(\Omega)$ and $v~\in~H^1_{2,\sigma}(\Omega).$ Using Lemma \ref{lem:FK}, we conclude that the operator $A_N$ with domain $D_2(A_N)\equiv {_\parallel}H^2_{2,\sigma}(\Omega)$ is invertible, self-adjoint and positive definite. Moreover, since 
$H^2_2(\Omega)^3$ is compactly embedded in $L_2(\Omega)^3$, then $A_N^{-1}$ is compact. Thus, $\sigma(A_N)\subset(0,\infty)$ and consists solely of eigenvalues $\{\Lambda_n\}_{n\in \N}$ clustering at $+\infty$. The standard spectral theory and elliptic regularity imply the existence of an orthonormal basis of $L_{2,\sigma}(\Omega)$ of eigenfunctions $\{\varphi_n\}_{n\in \N}\subset D_2(A_N)$ for $A_N$. Furthermore, the bilinear form $b(\cdot,\cdot)$ defines the inner product 
\begin{equation}\label{eq:inneraH12}
\langle u,v\rangle_{H^1_2(\Omega)}:=b(u,v)=\zeta (u|v)_\Gamma+2\upnu (D(u)|D(v))_\Omega,\qquad u,v\in H^1_{2,\sigma}(\Omega),
\end{equation}
with associated norm equivalent to $|\cdot|_{H^1_2(\Omega)}$, thanks to Lemma \ref{lem:FK} and trace theory. Then, $\{\varphi_n/\sqrt \Lambda_n\}_{n\in \N}$ is orthonormal in $H^1_{2,\sigma}(\Omega)$ endowed with the inner product \eqref{eq:inneraH12}. In fact, if $u\in H^1_{2,\sigma}(\Omega)$ satisfies $\langle u,\varphi_n/\sqrt\Lambda_n\rangle_{H^1_2(\Omega)}=0$ for every $n\in \N$, then
\[
0=\frac{1}{\sqrt{\Lambda_n}}b(\varphi_n,u)=\frac{1}{\sqrt{\Lambda_n}}(A_n\varphi_n|u)_\Omega=\sqrt{\Lambda_n}(\varphi_n|u)_\Omega\;\Rightarrow\, u=0,
\]
showing that $\{\varphi_n/\sqrt \Lambda_n\}_{n\in \N}$ is complete. 
Finally, 
\begin{multline*}
\langle \frac{\varphi_n}{\sqrt{\Lambda_n}},\frac{\varphi_m}{\sqrt{\Lambda_m}}\rangle_{H^1_2(\Omega)}=\frac{1}{\sqrt{\Lambda_n}\,\sqrt{\Lambda_m}}b(\varphi_n,\varphi_m)=\frac{1}{\sqrt{\Lambda_n}\,\sqrt{\Lambda_m}}(A_N\varphi_n|\varphi_m)_\Omega
\\
=\frac{\Lambda_n}{\sqrt{\Lambda_n}\,\sqrt{\Lambda_m}}\delta_{nm},\quad n,m\in \N. 
\end{multline*}

We summarize these properties in the following theorem (see also \cite[Theorem 4.11]{ChQi10} 
where an analogous result is proved for the Navier-Stokes equations, 
using a reformulation of the Navier boundary condition \eqref{problem}$_4$ in terms of the vorticity). 

\begin{theorem}\label{th:basis}
The spectrum of the Stokes operator $A_N$ with Navier boundary condition is discrete and is contained in $(0,\infty)$. The eigenvalues $\{\lambda_n\}_{n\in \N}$ satisfy 
\[
0<\Lambda_0\le \Lambda_1\le\dots\le \Lambda_n\le \dots,\quad \lim_{n\to \infty}\Lambda_n=+\infty. 
\]
The corresponding eigenfunctions $\{\varphi_n\}_{n\in \N}\subset {_\parallel}H^2_{2,\sigma}(\Omega)$ form an orthonormal basis of $L_{2,\sigma}(\Omega).$ 
Moreover,  $\{\varphi_n/\sqrt \Lambda_n\}_{n\in \N}$ is an orthonormal basis of $H^1_{2,\sigma}(\Omega)$. 
\end{theorem}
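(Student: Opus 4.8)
The statement collects the facts assembled in the discussion above, and the plan is simply to cast them in the framework of the spectral theorem for a self-adjoint, positive operator with compact resolvent. First I would record that $A_N$, regarded as an unbounded operator on the Hilbert space $L_{2,\sigma}(\Omega)$ with domain $D_2(A_N)={_\parallel H}^2_{2,\sigma}(\Omega)$, is self-adjoint and positive definite; this follows from the integration-by-parts identity $b(u,v)=(A_Nu|v)_\Omega$, valid for $u\in{_\parallel H}^2_{2,\sigma}(\Omega)$ and $v\in H^1_{2,\sigma}(\Omega)$, together with the coercivity bound $b(v,v)=\zeta|v|^2_{L_2(\Gamma)}+2\upnu|D(v)|^2_{L_2(\Omega)}\ge c\,|v|^2_{H^1_2(\Omega)}\ge c'\,|v|^2_{L_2(\Omega)}$, which rests on Lemma~\ref{lem:FK} and trace theory. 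In particular $0\notin\sigma(A_N)$ and $\sigma(A_N)\subset[c',\infty)\subset(0,\infty)$, so $A_N^{-1}\in\cB(L_{2,\sigma}(\Omega))$ exists and is self-adjoint and positive; and since $X_1={_\parallel H}^2_{2,\sigma}(\Omega)$ embeds compactly into $X_0=L_{2,\sigma}(\Omega)$ (by the compact embedding $H^2_2(\Omega)\hookrightarrow L_2(\Omega)$), the map $A_N^{-1}\colon L_{2,\sigma}(\Omega)\to L_{2,\sigma}(\Omega)$ factors through this embedding and is therefore compact.

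Next I would invoke the spectral theorem for the compact, self-adjoint, positive, injective operator $A_N^{-1}$: there is a nonincreasing sequence of positive eigenvalues $\mu_0\ge\mu_1\ge\cdots>0$ with $\mu_n\to0$, and an orthonormal basis $\{\varphi_n\}_{n\in\N}$ of $L_{2,\sigma}(\Omega)$ with $A_N^{-1}\varphi_n=\mu_n\varphi_n$. Setting $\Lambda_n:=\mu_n^{-1}$ yields $A_N\varphi_n=\Lambda_n\varphi_n$, $0<\Lambda_0\le\Lambda_1\le\cdots$, $\Lambda_n\to+\infty$, and $\varphi_n=\mu_n^{-1}A_N^{-1}\varphi_n\in D_2(A_N)={_\parallel H}^2_{2,\sigma}(\Omega)$; moreover $\sigma(A_N)=\{\Lambda_n:n\in\N\}$ consists solely of eigenvalues of finite multiplicity, hence is discrete and contained in $(0,\infty)$. (The $q$-independence of $\sigma(A_N)$ noted in the proof of Proposition~\ref{pro:L-calculus}(a) shows this description does not depend on the underlying $L_q$-setting.)

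It remains to establish the last assertion. On $H^1_{2,\sigma}(\Omega)$ I would work with the inner product $\langle u,v\rangle_{H^1_2(\Omega)}:=b(u,v)$ of~\eqref{eq:inneraH12}, whose associated norm is equivalent to $|\cdot|_{H^1_2(\Omega)}$ by Lemma~\ref{lem:FK} and trace theory, so that $(H^1_{2,\sigma}(\Omega),\langle\cdot,\cdot\rangle_{H^1_2(\Omega)})$ is a Hilbert space. From $b(\varphi_n,\varphi_m)=(A_N\varphi_n|\varphi_m)_\Omega=\Lambda_n\delta_{nm}$ one sees that $\{\varphi_n/\sqrt{\Lambda_n}\}_{n\in\N}$ is orthonormal for this inner product; and if $u\in H^1_{2,\sigma}(\Omega)$ is orthogonal to every $\varphi_n/\sqrt{\Lambda_n}$ in $\langle\cdot,\cdot\rangle_{H^1_2(\Omega)}$, then $\Lambda_n^{-1/2}b(\varphi_n,u)=\Lambda_n^{1/2}(\varphi_n|u)_\Omega=0$ forces $(\varphi_n|u)_\Omega=0$ for all $n$, whence $u=0$ because $\{\varphi_n\}_{n\in\N}$ is an orthonormal basis of $L_{2,\sigma}(\Omega)$. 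Hence $\{\varphi_n/\sqrt{\Lambda_n}\}_{n\in\N}$ is an orthonormal basis of $H^1_{2,\sigma}(\Omega)$.

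Everything here is routine spectral theory once two nontrivial inputs are granted, and this is where the real work lies: the Friedrichs-Korn inequality of Lemma~\ref{lem:FK}, which, by virtue of the Navier boundary condition, keeps the form $b$ coercive on $H^1_{2,\sigma}(\Omega)$ even when $\Omega$ is a solid of revolution, and thereby confines $\sigma(A_N)$ to $(0,\infty)$; and the identification of the operator domain with ${_\parallel H}^2_{2,\sigma}(\Omega)$, which rests on $H^2$-regularity for the stationary Stokes system with Navier boundary conditions and is what makes $A_N^{-1}$ compact and forces the eigenfunctions into ${_\parallel H}^2_{2,\sigma}(\Omega)$. Both are available from the preceding sections, so no further obstacle arises.
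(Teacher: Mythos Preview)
Your proposal is correct and follows essentially the same route as the paper: self-adjointness and positive definiteness of $A_N$ via the identity $b(u,v)=(A_Nu|v)_\Omega$ and the Korn--Friedrichs inequality, compactness of $A_N^{-1}$ from the embedding ${_\parallel H}^2_{2,\sigma}(\Omega)\hookrightarrow L_{2,\sigma}(\Omega)$, the spectral theorem for compact self-adjoint operators, and then the verification that $\{\varphi_n/\sqrt{\Lambda_n}\}$ is orthonormal and total in $(H^1_{2,\sigma}(\Omega),b)$ by reducing to the $L_2$-basis property. The paper's argument is exactly this, presented as the discussion preceding the theorem statement.
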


This basis of eigenfunctions for the Stokes operator with Navier boundary conditions will be used to approximate solutions to \eqref{problem}$_{1,2}$ in the Leray-Hopf class. The weak formulation of \eqref{problem} can be obtained by dot-multiplying \eqref{problem}$_1$ by a test function $\phi \in H^1_{2,\sigma}(\Omega)$ and integrating the resulting equation first over space, and then in time. This leads to the following system of equations (recall also \eqref{finite-rank}):
\begin{equation}\label{eq:weak}
\begin{aligned}
&((I+K) v(t)|\phi)_\Omega +(\sa(t)|x\times \phi)_\Omega+ \int^t_0[(v\cdot\nabla v+2(\sa-\omega)\times v|\phi)_\Omega+b(v,\phi)]\; d\tau 
\\
&\qquad\qquad=((I+K) v(0)|\phi)_\Omega +(\sa(0)|x\times \phi)_\Omega,\text{ for all }\phi \in H^1_{2,\sigma}(\Omega),\; t\in (0,\infty),
\\
&\;\II\, \sa(t) + \int^t_0(\sa-\omega)\times\II\sa\; d\tau =\II\, \sa(0),\quad \text{for all }t\in (0,\infty). 
\end{aligned}
\end{equation}

\begin{defi}
The couple $(v,\sa)$ is a {\em weak solution} \`a la Leray-Hopf of \eqref{problem} if the following conditions are satisfied. 
\begin{enumerate}
\setlength\itemsep{1mm}
\item $v\in C_w ([0,\infty);L_{2,\sigma}(\Omega))\cap L_\infty((0,\infty);L_{2,\sigma}(\Omega))
\cap L_{2,loc}([0,\infty);H^1_{2,\sigma}(\Omega))$. 
\item $\sa\in C^0([0,\infty))\cap C^1((0,\infty))$.  
\item $(v,\sa)$ satisfies \eqref{eq:weak}. 
\item The {\em strong energy inequality} holds: 
\begin{equation}\label{eq:strong_energy}
{\sf E}(v(t),\sa(t))+2\upnu \int^t_s|D(v(\tau))|_{L_2(\Omega)}^2\; d\tau+\zeta\int^t_s|v(\tau)|_{L_2(\Gamma)}^2\;d\tau\le{\sf E}(v(s),\sa(s)),
\end{equation}
for all $t\ge s$ and a.a. $s\ge 0$ including $s=0$. 
\end{enumerate}
\end{defi}
The class of the above solutions is nonempty for initial data having finite kinetic energy. 
\begin{theorem}\label{th:weak}
For any initial value $(v_0, {\sf a}_0)\in L_{2,\sigma}(\Omega)\times \R^3$, 
there exists at least one weak solution $(v,\sa)$ \`a la Leray-Hopf such that 
\[
\lim_{t\to 0^+}|v(t)-v_0|_{L_2(\Omega)}=\lim_{t\to 0^+}|\sa(t)-\sa_0|=0. 
\]
\end{theorem}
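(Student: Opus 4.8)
\textbf{Proof proposal for Theorem \ref{th:weak}.}
The plan is to construct weak solutions via a Faedo--Galerkin scheme built on the orthonormal basis $\{\varphi_n\}$ of eigenfunctions of the Stokes operator $A_N$ provided by Theorem~\ref{th:basis}, following the classical Leray--Hopf strategy as adapted to the Navier-boundary setting in \cite{ChQi10}, but keeping track of the additional finite-dimensional variable $\sa$ and the compact operator $K$. First I would fix $N\in\N$, set $V_N:=\mathrm{span}\{\varphi_1,\dots,\varphi_N\}$, and seek an approximate solution $v_N(t)=\sum_{k=1}^N c_k^N(t)\varphi_k$ together with $\sa_N(t)\in\R^3$ solving the finite-dimensional system obtained by testing \eqref{problem}$_1$ against each $\varphi_k$ and coupling it to \eqref{problem}$_5$, with $\omega_N:=\II^{-1}\int_\Omega x\times v_N\,dx$ and initial data the $L_{2,\sigma}$-projection of $v_0$ onto $V_N$ and $\sa_N(0)=\sa_0$. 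Since the Helmholtz projection makes the $\nabla p$ term disappear and $(I+K)$ is invertible on the finite-dimensional space (Proposition~\ref{pro:E-invertible}), this is a genuine ODE system with quadratic (hence locally Lipschitz) right-hand side, so Picard--Lindelöf gives local existence on a maximal interval.

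Next I would derive the a~priori energy estimate: testing the Galerkin system appropriately and using that $K$ is symmetric on $L_{2,\sigma}$ together with Lemma~\ref{lem:positive_def}, one obtains exactly the differential identity underlying Lemma~\ref{lem:energy_balance}, namely $\tfrac{d}{dt}{\sf E}(v_N,\sa_N)+2\upnu|D(v_N)|_{L_2(\Omega)}^2+\zeta|v_N|_{L_2(\Gamma)}^2=0$; crucially the bilinear terms $(v_N\cdot\nabla v_N|v_N)_\Omega$, $((\sa_N-\omega_N)\times v_N|v_N)_\Omega$ and $((\sa_N-\omega_N)\times\II\sa_N|\sa_N)$ all vanish by skew-symmetry, and the coupling between the rigid and fluid parts cancels exactly as in the proof of Lemma~\ref{lem:energy_balance}. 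Integrating and invoking the positive-definiteness $c|v_N|_{L_2(\Omega)}^2\le|v_N|_{L_2(\Omega)}^2-(\II\omega_N|\omega_N)$ and $(\II\sa_N|\sa_N)\ge0$ yields uniform bounds: $v_N$ bounded in $L_\infty((0,\infty);L_{2,\sigma}(\Omega))\cap L_2((0,T);H^1_{2,\sigma}(\Omega))$ for every $T$ (using Lemma~\ref{lem:FK} to convert the dissipation into an $H^1$-bound), and $\sa_N$ bounded by conservation of $|\II\sa_N|=|\II\sa_0|$; in particular the approximate solutions are global. From \eqref{problem}$_5$ one then gets $\dot\sa_N$ bounded, so $\sa_N$ is bounded in $C^1$. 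A standard duality estimate, testing the equation against $\varphi_k$ and bounding $v_N\cdot\nabla v_N$, $(\sa_N-\omega_N)\times v_N$ in, say, $L_{4/3}((0,T);H^{-1})$, gives $\partial_t[(I+K)v_N+(\text{rigid part})]$ bounded in $L_{4/3}((0,T);(H^1_{2,\sigma})^\ast)$.

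Then comes the compactness step: by Banach--Alaoglu and a diagonal argument over $T\to\infty$ I extract a subsequence with $v_N\rightharpoonup v$ weakly-$\ast$ in $L_\infty(L_{2,\sigma})$ and weakly in $L_2(H^1_{2,\sigma})$, $\sa_N\to\sa$ in $C([0,T];\R^3)$ (Arzelà--Ascoli), and by the Aubin--Lions--Simon lemma $v_N\to v$ strongly in $L_2((0,T);L_2(\Omega))$, which suffices to pass to the limit in the quadratic terms $v_N\cdot\nabla v_N$ and $(\sa_N-\omega_N)\times v_N$ (note $\omega_N\to\omega$ since $K$ has finite rank). This identifies $(v,\sa)$ as a solution of the weak formulation \eqref{eq:weak} for test functions in $\bigcup_N V_N$, hence, by density in $H^1_{2,\sigma}(\Omega)$, for all admissible $\phi$. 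Weak continuity $v\in C_w([0,\infty);L_{2,\sigma}(\Omega))$ follows from the uniform $L_\infty(L_2)$ bound plus the time-regularity of the weak formulation (the standard Strauss/Lions--Magenes argument); the strong energy inequality \eqref{eq:strong_energy} is obtained by passing to the limit in the Galerkin energy identity using weak lower semicontinuity of the $L_2(H^1)$- and $L_2(L_2(\Gamma))$-norms, retaining only ``$\le$'', and a standard argument recovering the inequality from a.a. $s$ including $s=0$. Finally, strong convergence at $t=0^+$: for $\sa$ this is immediate from $C^0$-convergence and $\sa_N(0)=\sa_0$; for $v$, combine weak continuity $v(t)\rightharpoonup v_0$ as $t\to0^+$ with the energy inequality at $s=0$, which forces $\limsup_{t\to0^+}|v(t)|_{L_2}\le|v_0|_{L_2}$ (after absorbing the $(\II\sa|\sa)$ and $(\II\omega|\omega)$ pieces, which converge by continuity), and weak convergence plus norm convergence yields strong convergence in $L_2(\Omega)$.

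The main obstacle I anticipate is the passage to the limit in the nonlinear terms together with verifying that the limit genuinely satisfies \emph{all} clauses of the weak-solution definition simultaneously, in particular reconciling the strong energy inequality with the weak-continuity statement at $s=0$; the Navier boundary condition itself causes no real trouble because it is encoded in the coercive bilinear form $b(\cdot,\cdot)$ and the space $H^1_{2,\sigma}(\Omega)$, and the rigid-body coupling is harmless thanks to the finite rank of $K$ and the exact cancellations in the energy balance. One should also be slightly careful that the boundary term $\zeta|v_N|_{L_2(\Gamma)}^2$ behaves well under the limit: this is handled by the trace theorem and weak lower semicontinuity on $L_2((0,T);L_2(\Gamma))$, using that $v_N\rightharpoonup v$ in $L_2((0,T);H^1_{2,\sigma}(\Omega))$ and the trace operator is compact from $H^1(\Omega)$ into $L_2(\Gamma)$.
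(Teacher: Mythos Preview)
Your proposal is correct and follows essentially the same route as the paper's own proof: Faedo--Galerkin approximation on the eigenbasis of $A_N$ from Theorem~\ref{th:basis}, the exact energy identity at the approximate level (with the same cancellations you list), uniform bounds via Lemma~\ref{lem:positive_def} and Lemma~\ref{lem:FK}, a duality bound on $(I+K)\dot v_n$ in $L_{4/3,\mathrm{loc}}((H^1_{2,\sigma})')$, Aubin--Lions compactness, passage to the limit by density, weak continuity via the Lions--Temam argument, and strong $L_2$-continuity at $t=0^+$ from weak convergence plus $\limsup$ of the norm through the energy inequality. The only cosmetic difference is that the paper reads off boundedness of $\sa_n$ directly from the energy ${\sf E}$ rather than from conservation of $|\II\sa_n|$, but either works.
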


\begin{proof}
The existence of a global weak solution will be accomplished using the Galerkin method with the basis constructed in Theorem \ref{th:basis}. Consider the approximating solutions 
\[
v_n(t,x)=\sum^n_{k=0}c_{nk}(t)\varphi_k(x),\quad \sa_n(t)=\sum^3_{i=1}\sa_{ni}(t)\, {\sf e}_i,\qquad n\in \N,\] 
satisfying 
\begin{equation}\label{eq:galerkin0}\begin{aligned}
&((I+K) v_n(t)|\varphi_r)_\Omega +(\sa_n(t)|x\times \varphi_r)_\Omega+ \int^t_0(v_n\cdot\nabla v_n+2((\sa_n-\omega_n)\times v_n|\varphi_r)_\Omega\; d\tau 
\\
&\qquad+\int^t_0b(v_n,\varphi_r)\;d\tau=((I+K) v_n(0)|\varphi_r)_\Omega +(\sa_n(0)|x\times \varphi_r)_\Omega,\quad   r=1,\dots n, 
\\
&\;\II\, \sa_n(t) + \int^t_0(\sa_n-\omega_n)\times\II\sa_n\; d\tau =\II\, \sa_n(0), 
\\
&v_n(0)=\sum^n_{k=0}(v_0|\varphi_k)_\Omega\, \varphi_k,\qquad \sa_n(0)=\sum^3_{i=1}(\sa_0|{\sf e}_i)\, {\sf e}_i,
\end{aligned}\end{equation}
where 
\[
\omega_n=\sum^n_{k=0}c_{nk}(t)\,\II^{-1}\int_\Omega x \times \varphi_k\,dx. 
\]
The coefficients $c_{nr}$ and $a_{ni}$ have to satisfy the following system of differential equations (with summation on repeated indices ranging from $0$ to $n$ and obviously, without summation on $n$ and $i$): 
\begin{equation}\label{eq:galerkin}
\begin{aligned}
&B^{(n)}_{rk}\dot c_{nk}+C^{(n)}_{rk}c_{nk}+D^{(n)}_{rk\ell}c_{nk}c_{n\ell}+E^{(n)}_r=0,\quad c_{nk}(0)=(v_0|\varphi_k)_\Omega,\quad\  r=1,\dots n, 
\\
&\;\lambda_i\dot\sa_{ni} +( (\sa_n-\omega_n)\times\II\sa_n|{\sf e}_i) =0, \quad \sa_{ni}(0)=(\sa_0|{\sf e}_i),\quad i=1,2,3,
\end{aligned}\end{equation}
where
\[\begin{split}
&B^{(n)}_{rk}:=\delta_{rk}+(K\varphi_k|\varphi_r)_\Omega,
\\
&C^{(n)}_{rk}:=\left[\left((\II^{-1}\int_\Omega x\times \varphi_k\;dx)\times \II\sa_n\left|\right.\II^{-1}\int_\Omega x\times \varphi_r\;dx\right)\right.
\\
&\qquad\qquad\qquad\qquad\qquad\qquad\qquad\qquad\qquad\left.+2(\sa_n|\int_\Omega \varphi_k\times\varphi_r\;dx)+b(\varphi_k,\varphi_r)\right],
\\
&D^{(n)}_{rk\ell}:=(\varphi_k\cdot\nabla \varphi_\ell|\varphi_r)_\Omega-2\left(\int_{\Omega}\varphi_k\times \varphi_r\,dx\right|\left.\II^{-1}\int_\Omega x \times \varphi_\ell\,dx\right),
\\
&E^{(n)}_r:=-\left(\sa_n\times \II\sa_n\left|\right.\II^{-1}\int_\Omega x\times\varphi_r\;dx\right). 
\end{split}\]
For every $n\in \N$, \eqref{eq:galerkin} is a system of first order, quadratic ordinary differential equations with constant coefficients, and it admits a unique solution defined in some interval $[0,T_n)$ with $T_n > 0$ (note that the operator $I+K$ is invertible, as shown in the proof of Proposition~\ref{pro:E-invertible}). Actually, $T_n=+\infty$ since the following holds
\[
\frac 12 \frac{d }{d t}\left[((I+K)v_n|v_n)_\Omega+(\II\sa_n|\sa_n)\right]+2\upnu |D(v_n)|^2_{L_2(\Omega)}+\zeta|v_n|^2_{L_2(\Gamma)}=0,
\]
and by \eqref{I+K positive}, \eqref{energy} and Lemma \ref{lem:positive_def}, it implies the uniform energy estimate 
\begin{multline}\label{eq:energy_n}
{\sf E}(v_n(t),\sa_n(t))+2\int^t_0\upnu |D(v_n(\tau))|^2_{L_2(\Omega)}\;d\tau+\zeta\int^t_0|v_n(\tau)|^2_{L_2(\Gamma)}\;d\tau={\sf E}(v_n(0),\sa_n(0))
\\
\le |v_0|^2_{L_2(\Omega)}+(\II\sa_0|\sa_0). 
\end{multline}
The latter inequality provides also the following important information. 
\begin{enumerate}
\item[{\sf (a)}] $\{v_n\}_{n\in \N}$ is uniformly bounded in $L_\infty((0,\infty); L_{2,\sigma}(\Omega)).$
\item[{\sf (b)}] $\{D(v_n)\}_{n\in \N}$ is uniformly bounded in $L_2((0,\infty);L_{2,\sigma}(\Omega)^{3\times 3})$ 
and $\{v_n\}_{n\in \N}$ is uniformly bounded in $L_2((0,\infty); L_2(\Gamma)^3).$ By Lemma \ref{lem:FK}, the sequence $\{v_n\}_{n\in \N}$ is uniformly bounded in $L_{2,loc}([0,\infty);H^1_{2,\sigma}(\Omega)).$
\item[{\sf (c)}] $\{\sa_n\}_{n\in \N}$ is uniformly bounded in $BC^1([0,\infty)).$
\end{enumerate}
For every $n\in \N$, we denote by $\mathcal P_n$ the orthogonal projecton of $H^1_{2,\sigma}(\Omega)$ onto the linear span of $\{\varphi_1/\sqrt{\Lambda_1},\dots,\varphi_n/\sqrt{\Lambda_n}\}$. By Theorem \ref{th:basis}, we infer that 
\[
|\mathcal P_n w|_{H^1_2(\Omega)}\le |w|_{H^1_2(\Omega)}\quad\text{and}\quad \lim_{n\to \infty}\mathcal P_nw=w\quad\text{in }H^1_{2,\sigma}(\Omega). 
\]
From \eqref{eq:galerkin0}, for every $w\in H^1_{2,\sigma}(\Omega)$ one has 
\begin{multline*}
((I+K) \dot v_n|w)_\Omega=((I+K) \dot v_n|\mathcal P_n w)_\Omega
\\
\quad=-(\dot\sa_n|x\times \mathcal P_n w)_\Omega-(v_n\cdot\nabla v_n+2((\sa_n-\omega_n)\times v_n|\mathcal P_n w)_\Omega-b(v_n,\mathcal P_n w).
\end{multline*}
Using H\"older's inequality as well as interpolation inequalities, the trace theorem and Lemma \ref{lem:FK}, the right-hand side of the latter displayed equation can be estimated as follows with a positive constant $c$ independent of $n$
\[\begin{split}
|((I+K) \dot v_n|w)_\Omega|&\le c \left[(|\sa_n|+|D(v_n)|_{L_2(\Omega)})|\sa_n|+|v_n|^{1/2}_{L_2(\Omega)}|D(v_n)|^{3/2}_{L_2(\Omega)}\right.
\\
&\qquad\qquad\qquad\qquad\qquad \left.+(|v_n|_{L_2(\Omega)}+1)|D(v_n)|_{L_2(\Omega)}\right]|w|_{H^1_2(\Omega)}. 
\end{split}\]
As a consequence, we have 
\begin{enumerate}
\item[{\sf (d)}] $\{(I+K) \dot v_n\}_{n\in \N}$ remains in a bounded set of $L_{4/3,loc}([0,\infty); (H^1_{2,\sigma}(\Omega))')$. By \eqref{I+K positive} and (c), we also infer that  $\{(I+K) v_n\}_{n\in \N}$ is uniformly bounded in $L_{2,loc}([0,\infty);H^1_{2,\sigma}(\Omega))$. 
\end{enumerate}
Properties {\sf (a)}--{\sf (c)} imply the existence of functions
\[
v\in L_{2,loc}([0,\infty);H^1_{2,\sigma}(\Omega))\cap L_\infty((0,\infty);L_{2,\sigma}(\Omega)),\quad \sa\in C([0,\infty))
\] 
and subsequences $\{(v_{n_k},\sa_{n_k})\}_{n_k\in \N}$ such that, for every $T>0$,
\begin{equation}\label{eq:convergence1}
\begin{split}
&v_{n_k}\rightharpoonup v\quad \text{weakly in }L_{2}((0,T);H^1_{2,\sigma}(\Omega))
\\
&v_{n_k}\rightharpoonup v\quad \text{weakly-star in }L_\infty((0,\infty);L_{2,\sigma}(\Omega)),
\\
&\sa_{n_k}\to \sa\quad \text{uniformly in }[0,T]. 
\end{split}
\end{equation}
Since $H^1_{2,\sigma}(\Omega)\hookrightarrow L_{2,\sigma}(\Omega)\hookrightarrow (H^1_{2,\sigma}(\Omega))'$ with the first embedding being compact, by the Aubin-Lions Lemma (see \cite[Theorem 2.1]{Te01}) and \eqref{I+K positive}, we conclude that 
\begin{equation}\label{eq:convergence2}
v_n\to v\quad \text{strongly in }L_{2}((0,T);L_{2,\sigma}(\Omega)). 
\end{equation}
The convergence results \eqref{eq:convergence1} and \eqref{eq:convergence2} allow to pass to limit in \eqref{eq:galerkin0} and \eqref{eq:energy_n}. We omit this proof as it is standard (see e.g. \cite{Te01,Ma12}). 

Note that as a byproduct of \eqref{eq:weak}$_2$, we also have that $\sa\in C^1((0,\infty))$ and $v\in C_w([0,\infty);(H^1_{2,\sigma}(\Omega))')$. Since $v\in L_\infty((0,\infty);L_{2,\sigma}(\Omega))\cap C_w([0,\infty);(H^1_{2,\sigma}(\Omega))')$, by \cite[Lemma 1.4]{Te01}, we have that $v\in C_w([0,\infty);L_{2,\sigma}(\Omega))$. Finally, the map 
\[
t\mapsto  |v(t)|_{L_2(\Omega)}
\]
is lower semicontinuous at $t = 0$. By Lemma \ref{lem:positive_def} and the strong energy inequality with $s = 0$, we then infer that \[
\lim_{t\to 0^+}|v(t)-v_0|_{L_2(\Omega)}=\lim_{t\to 0^+}|\sa(t)-\sa_0|=0. 
\]
\end{proof}

\begin{remark}\label{rem:regularity_w}
If $v\in L_\infty((0,T);L_{2,\sigma}(\Omega))\cap L_2([0,T);H^1_{2,\sigma}(\Omega))$ then 
\[
v\in L_p([0,T); L_q(\Omega)^3),\qquad \text{where }\quad\frac 2p+\frac 3q=\frac 32,\ p\in [2,\infty]. 
\]
In fact, by interpolation, for $3/q+2/p=3/2$ 
\[
|v|_{L_q(\Omega)}\le |v|_{H^1_2(\Omega)}^{2/p}|v|_{L_2(\Omega)}^{1-2/p}.
\]
Then, 
\begin{multline*}
|v|_{L_p([0,T);L_q(\Omega))}\le\left(\int^T_0|v|_{H^1_2(\Omega)}^{2}|v|_{L_2(\Omega)}^{(1-2/p)p}d\tau\right)^{1/p}
\\
\le |v|_{L_\infty((0,T);L_2(\Omega))}^{1-2/p}|v|_{L_2([0,T);H^1_2(\Omega))}^{2/p}. 
\end{multline*}
\end{remark}

As for the classical Navier-Stokes equations, also for the problem at hand, it is still an open question whether 
weak solutions are unique.  However, we can show that such property holds if weak solutions are ``slightly'' more regular. 

\begin{theorem}\label{th:cduid}
Let $(v_1,\sa_1)$ and $(v_2,\sa_2)$ be two weak solutions corresponding to initial data $(v_{0},\sa_{0})$. 
Suppose that for some $T>0$
\[
v_2\in L_s([0,T);L_r(\Omega)^3),\ \text{with }s\in(2,\infty),\, r\in (3,\infty)\text{ satisfying }\frac 2s+\frac 3r=1. 
\]
Then, $(v_1,\sa_1)\equiv (v_2,\sa_2)$ in $[0,T).$   
\end{theorem}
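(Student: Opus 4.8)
The plan is to establish a weak-strong uniqueness result in the spirit of the classical Serrin/Prodi argument for the Navier--Stokes equations, adapted to accommodate the extra rigid-body variable $\sa$, the modified Helmholtz structure encoded by $I+K$, and the Navier boundary term $b(\cdot,\cdot)$. Write $w:=v_1-v_2$ and $\sa := \sa_1-\sa_2$ (abusing notation), and $\omega := \II^{-1}\int_\Omega x\times w\,dx$. First I would show that the difference $(w,\sa)$ satisfies, for a.e.\ $t$,
\[
\begin{aligned}
\frac12\frac{d}{dt}\big[((I+K)w|w)_\Omega + (\II\sa|\sa)\big] + b(w,w)
&= -\,(w\cdot\nabla v_2 \,|\, w)_\Omega - 2((\sa-\omega)\times v_2\,|\,w)_\Omega \\
&\quad - ((\sa-\omega)\times\II\sa_2\,|\,\sa) - ((\sa_2-\omega_2)\times\II\sa\,|\,\sa),
\end{aligned}
\]
using that $(w\cdot\nabla v_1|w)_\Omega$-type terms with the smooth/rough splitting collapse, exactly as in the standard computation: the cubic term $(w\cdot\nabla w|w)=0$ by incompressibility and $(w|n)=0$, and only the term involving $\nabla v_2$ (equivalently, after integration by parts, $v_2\cdot\nabla w$ tested against $w$) survives. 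Deriving this energy identity rigorously requires the Lions--Magenes type lemma to justify $\frac{d}{dt}((I+K)w|w)_\Omega = 2((I+K)\dot w|w)_\Omega$ with $w\in C_w([0,\infty);L_{2,\sigma})\cap L_{2,\mathrm{loc}}(H^1_{2,\sigma})$ and $(I+K)\dot w\in L_{4/3,\mathrm{loc}}((H^1_{2,\sigma})')$; the positivity and symmetry of $I+K$ from \eqref{I+K positive} make the left side comparable to $|w|^2_{L_2(\Omega)}$.

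Next I would estimate the right-hand side. The critical term is $|(w\cdot\nabla v_2|w)_\Omega|\le |v_2|_{L_r(\Omega)}|w|_{L_{r'}(\Omega)}|\nabla w|_{L_2(\Omega)}$ where, with the Serrin exponents $2/s+3/r=1$, interpolation $L_{r'}\hookleftarrow (L_2,L_6)$ and Sobolev embedding give $|w|_{L_{2r/(r-2)}(\Omega)}\le c|w|_{L_2(\Omega)}^{1-3/r}|\nabla w|_{L_2(\Omega)}^{3/r}$, hence
\[
|(w\cdot\nabla v_2|w)_\Omega| \le c\,|v_2|_{L_r(\Omega)}\,|w|_{L_2(\Omega)}^{1-3/r}\,|\nabla w|_{L_2(\Omega)}^{1+3/r}
\le \upnu\,|\nabla w|_{L_2(\Omega)}^2 + c_\upnu\,|v_2|_{L_r(\Omega)}^{s}\,|w|_{L_2(\Omega)}^2,
\]
by Young's inequality with exponents $2/(1+3/r)$ and $s/2$ (using $1+3/r<2$ since $r>3$). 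The remaining terms involving $\sa$ and $\omega$ are estimated more crudely: $\omega$ is controlled by $|w|_{L_2(\Omega)}$, $\II\sa_2$ and $\sa_2$ are bounded (conserved momentum / energy inequality applied to the solution $(v_2,\sa_2)$, plus $\sa_2\in C^1$), $v_2\in L_\infty(L_2)$, so one gets a bound of the form $c(|w|_{L_2(\Omega)}^2+|\sa|^2) + \tfrac{\upnu}{2}|\nabla w|^2_{L_2(\Omega)}$, where $|\nabla w|_{L_2(\Omega)}$ is absorbed using $|D(w)|^2_{L_2(\Omega)}\le b(w,w)$ and the Friedrichs--Korn inequality \eqref{eq:friedrichs-korn} to dominate $|\nabla w|_{L_2(\Omega)}$ by $|D(w)|_{L_2(\Omega)}$ (note $(w|n)=0$ holds, and although $w$ need not satisfy the Navier boundary condition pointwise, the bilinear form is set up on all of $H^1_{2,\sigma}$ and $|D(w)|_{L_2(\Omega)}\le c\,b(w,w)^{1/2}$ still holds after using $b(w,w)\ge 2\upnu|D(w)|^2_{L_2(\Omega)}$ directly).

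Collecting terms yields a differential inequality
\[
\frac{d}{dt}\,\phi(t) \le g(t)\,\phi(t), \qquad \phi(t):=((I+K)w|w)_\Omega + (\II\sa|\sa)\ge c\big(|w(t)|_{L_2(\Omega)}^2+|\sa(t)|^2\big),
\]
with $g\in L_1((0,T))$ because $g(t)\le c(1+|v_2(t)|_{L_r(\Omega)}^{s})$ and $v_2\in L_s([0,T);L_r(\Omega))$ by hypothesis. Since $\phi(0)=0$ (both solutions share the initial data $(v_0,\sa_0)$, and the left-continuity/lower-semicontinuity at $t=0$ from Theorem~\ref{th:weak} gives $w(0)=0$, $\sa(0)=0$), Grönwall's lemma forces $\phi\equiv 0$ on $[0,T)$, hence $v_1\equiv v_2$ and $\sa_1\equiv\sa_2$. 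I expect the main obstacle to be the rigorous justification of the energy identity for the difference: weak solutions only satisfy an energy \emph{inequality}, so one must test the weak formulation \eqref{eq:weak} for $v_1$ against $v_2$ and vice versa, combine with the energy inequality for each (valid for a.a.\ $s$ including $s=0$), and carefully handle the non-symmetric cross terms $\int_0^t (v_i\cdot\nabla v_i|v_j)\,d\tau$ together with the $(I+K)$-weighted inner products and the $b(\cdot,\cdot)$ terms — this is the delicate bookkeeping step, where one uses that the Serrin-regular solution $v_2$ has enough integrability to make $\int_0^t(v_2\cdot\nabla v_2|v_1)\,d\tau$ and the symmetrization legitimate. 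The $\sa$-ODE contributes no regularity difficulty since $\sa_i\in C^1$, but its coupling through $\omega$ must be tracked consistently throughout.
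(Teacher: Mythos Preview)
Your proposal is correct and follows essentially the same Serrin--Prodi route as the paper: derive a cross-identity for $((I+K)v_1|v_2)_\Omega+(\II\sa_1|\sa_2)$, combine it with the strong energy inequalities of both solutions to obtain an energy inequality for the difference (this is exactly the ``delicate bookkeeping'' you anticipate; the paper carries it out via mollification and arrives at \eqref{eq:energy_uniqueness}), estimate the surviving convective term by H\"older--interpolation--Young, and close with Gr\"onwall. Your critical term $(w\cdot\nabla v_2|w)_\Omega$ and the paper's $(v\cdot\nabla v|v_2)_\Omega$ are the same object after an integration by parts, and the exponent bookkeeping matches.

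One small point to tighten: when you absorb $|\nabla w|_{L_2(\Omega)}$ into $b(w,w)$ you invoke \eqref{eq:friedrichs-korn}, which requires the Navier boundary condition on $w$---and as you note, the difference $w=v_1-v_2$ need not satisfy it. The fix is to use \eqref{eq:korn} instead (valid for all of $H^1_q(\Omega)^3$), which gives $|w|_{H^1_2(\Omega)}^2\le K_1^{-2}\big(|w|_{L_2(\Gamma)}^2+|D(w)|_{L_2(\Omega)}^2\big)\le c\,b(w,w)$ directly; this is precisely how the paper handles the absorption.
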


\begin{proof}
The proof of this theorem is rather standard (see e.g. \cite[1.5.1 Theorem]{So01}, \cite[Theorem 3.4.2]{Ma12}). 
Here we will only provide the main estimates.  

By the classical mollification method (see \cite[II Section 1.7]{So01}), one can show that 
\begin{equation}\label{eq:v1v2}
\begin{aligned}
&((I+K) v_1(t)|v_2(t))_\Omega+(\II\sa_1(t)|\sa_2(t))=((I+K) v_0|v_0)_\Omega+(\II\sa_0|\sa_0)
\\
&\quad\quad-\int^t_0[(v_1\cdot\nabla v_1+2(\sa_1-\omega_1)\times v_1|v_2)_\Omega+(v_2\cdot\nabla v_2+2(\sa_2-\omega_2)\times v_2|v_1)_\Omega]\; d\tau
\\
&\quad \quad+\int^t_0[((\sa_1-\omega_1)\times \II \sa_1|\omega_2-\sa_2)+((\sa_2-\omega_2)\times \II \sa_2|\omega_1-\sa_1)-2b(v_1,v_2)]d\tau, 
\end{aligned}
\end{equation}
for all $t\in [0,T)$, where 
\[
\omega_1=\II^{-1}\int_\Omega x \times v_1\,dx,\qquad \omega_2=\II^{-1}\int_\Omega x \times v_2\,dx. 
\]
We recall that both $(v_1,\sa_1)$ and $(v_2,\sa_2)$ satisfy the strong energy inequality:  
\[\begin{split}
&{\sf E}(v_1(t),\sa_1(t))+2\upnu \int^t_0|D(v_1(\tau))|_{L_2(\Omega)}^2\; d\tau+\zeta\int^t_0|v_1(\tau)|_{L_2(\Gamma)}^2\;d\tau\le{\sf E}(v_0,\sa_0),
\\
&{\sf E}(v_2(t),\sa_2(t))+2\upnu \int^t_0|D(v_2(\tau))|_{L_2(\Omega)}^2\; d\tau+\zeta\int^t_0|v_2(\tau)|_{L_2(\Gamma)}^2\;d\tau\le{\sf E}(v_0,\sa_0). 
\end{split}\]
Recall \eqref{I+K positive} and \eqref{energy}. Adding the last two inequalities, and subtracting twice of \eqref{eq:v1v2}, we find that the fields $v := v_1-v_2$, $\omega:=\omega_1-\omega_2$ and $\sa:= \sa_1-\sa_2$ must satisfy the following inequality
\begin{equation}\label{eq:energy_uniqueness}\begin{split}
{\sf E}(v(t),\sa(t))&+2\upnu \int^t_0|D(v(\tau))|_{L_2(\Omega)}^2\; d\tau+\zeta\int^t_0|v(\tau)|_{L_2(\Gamma)}^2\;d\tau
\le
\\
&\qquad \int^t_0[(v\cdot\nabla v+2(\sa-\omega)\times v|v_2)_\Omega+((\sa-\omega)\times \II \sa|\sa_2-\omega_2)]\;d\tau.
\end{split}\end{equation}
Take $p>2$ and $q\in (2,6)$ such that $1/q+1/r=1/2$ and $1/p+1/s=1/2$. Use Remark \ref{rem:regularity_w},  interpolation inequality with $\alpha=2/p=3/r$, Young's inequality and \eqref{eq:friedrichs-korn}, to obtain the following estimate 
\[\begin{split}
\int^t_0|(v\cdot\nabla v|v_2)_\Omega|\;d\tau&\le \int^t_0|v|_{L_q(\Omega)}|\nabla v|_{L_2(\Omega)}|v_2|_{L_r(\Omega)}\;d\tau
\\
&\le c_1\int^t_0|v|^{1-\alpha}_{L_2(\Omega)}|v|^{1+\alpha}_{H^1_2(\Omega)}|v_2|_{L_r(\Omega)}\;d\tau
\\
&\le m K_1^2\int^t_0|v|^{2}_{H^1_2(\Omega)}\; d\tau
+c_2\int^t_0|v|^{2}_{L_2(\Omega)}|v_2|^s_{L_r(\Omega)}\;d\tau
\\
&\le m \int^t_0[|D(v)|^{2}_{L_2(\Omega)}+|v|^2_{L_2(\Gamma)}]\; d\tau
+c_2\int^t_0|v|^{2}_{L_2(\Omega)}|v_2|^s_{L_r(\Omega)}\;d\tau,
\end{split}\]
where $2m:=\min\{2\upnu,\zeta\}>0$. 
Estimating the other nonlinear terms in a similar (actually less sophisticated) way and using Lemma \ref{lem:positive_def}, we obtain 
\[\begin{split}
{\sf E}(v(t),\sa(t))&+m \int^t_0|D(v(\tau))|_{L_2(\Omega)}^2\; d\tau+m\int^t_0|v(\tau)|_{L_2(\Gamma)}^2\;d\tau
\\
&\qquad \le
c_3\int^t_0[|v_2(\tau)|^s_{L_r(\Omega)}+ |v_2(\tau)|_{L_2(\Omega)}+|\sa_2(\tau)|]\,{\sf E}(v(\tau),\sa(\tau))\;d\tau.
\end{split}\]
By Gronwall's Lemma we conclude that ${\sf E}(v(t),\sa(t))\equiv 0$ for every $t\in [0,T)$. By \eqref{energy} and Lemma \ref{lem:positive_def}, the uniqueness property immediately follows.  
\end{proof}
The following properties of weak solutions to \eqref{problem} will be fundamental in ascertaining the long-time behavior in 
Section~\ref{se:long-time}. 
\begin{theorem}\label{th:weak-strong}
Let $(v,\sa)$ be a weak solution to \eqref{problem} corresponding to initial data $(v_0, {\sf a}_0)\in L_{2,\sigma}(\Omega)\times \R^3$, as from Theorem \ref{th:weak}. Then, 
there exists a time $\tau>0$  such that  
\begin{equation*}
\begin{split}
v\in H^1_{p,\,loc}([\tau,\infty);L_{q,\sigma}(\Omega))
\cap L_{p,\,loc}([\tau,\infty);{_\parallel H}^2_{q,\sigma}(\Omega)),\quad \sa\in C^1([\tau,\infty);\R^3),
\end{split}\end{equation*} 
for each fixed $p\in [2,\infty)$, $q\in (1,6)$.
Moreover, there exists a corresponding pressure field $p \in L_{p,\, loc}([\tau,\infty); H^1_q(\Omega))$, such that
$(v, p, \sa)$ satisfy \eqref{problem}$_{1,2}$, almost everywhere in $\Omega\times(0,\infty).$ 
 Finally, $v\in BC([\tau,\infty);B^{2-2/p}_{qp}(\Omega)^3)$, and 
\begin{equation}\label{eq:bvto0}
\lim_{t\to \infty}|v(t)|_{H^{2\alpha}_q(\Omega)}=0,\quad \text{for any fixed }\alpha\in[0,1).
\end{equation}
\end{theorem}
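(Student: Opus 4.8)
The plan is to bootstrap a Leray-Hopf weak solution into a strong solution via the parabolic regularization theory of Theorem~\ref{th:local_strong}, and then exploit the strict Lyapunov structure of the energy to force decay. First I would observe that, by the strong energy inequality \eqref{eq:strong_energy} and Lemma~\ref{lem:positive_def}, the function $t\mapsto {\sf E}(v(t),\sa(t))$ is nonincreasing and bounded below by $0$, hence the dissipation integral $\int_0^\infty\big(2\upnu|D(v)|_{L_2(\Omega)}^2+\zeta|v|_{L_2(\Gamma)}^2\big)\,d\tau$ is finite; combined with \eqref{eq:friedrichs-korn} this gives $\int_0^\infty|v(\tau)|_{H^1_2(\Omega)}^2\,d\tau<\infty$ and $|v(\tau)|_{H^1_2(\Omega)}\to 0$ along a sequence $\tau_j\to\infty$. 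More is needed, though: I want a \emph{single} time $\tau$ at which $v(\tau)\in {_\parallel}B^{2\mu-2/p}_{qp,\sigma}(\Omega)$ with the exponents permitted in Remark~\ref{remark-2}(b)–(c), say $p=q=2$ so $2\mu-2/p = 1$ (using $\mu=3/4$ would give the critical space ${_\parallel}H^{1/2}_{2,\sigma}$, but for the argument it is cleanest to gain a little extra: pick $\tau$ with $v(\tau)\in H^1_{2,\sigma}(\Omega)$, which is possible because the finiteness of $\int_0^\infty|v|_{H^1_2}^2\,d\tau$ forces $v(\tau)\in H^1_{2,\sigma}(\Omega)$ for a.a.\ $\tau$, and along such times we also have $v(\tau)\in {H}^1_{2,\sigma}(\Omega)$, the admissible initial class of Remark~\ref{remark-2}(c)).

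Next I would feed $(v(\tau),\sa(\tau))\in {H}^1_{2,\sigma}(\Omega)\times\R^3$ into Theorem~\ref{th:local_strong} (in the form of Remark~\ref{remark-2}(b),(c)) to obtain, on a maximal interval $[\tau,t_+)$, a strong solution in $H^1_{p,\mu}\cap L_{p,\mu}$ for every admissible $p\ge2$, $q\in[2,6)$ with $\mu=1/p+3/2q-1/4$; by Theorem~\ref{th:cduid} (weak-strong uniqueness) this strong solution coincides with the given weak solution on $[\tau,t_+)$, since for $q\in(3,6)$ and suitable $p$ we have $v\in L_s([\tau,t_+);L_r)$ with $2/s+3/r=1$. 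The blow-up criterion from \eqref{eq:grad_estimate} in Remark~\ref{remark-2}(c) says that if $t_+<\infty$ then $2\upnu|D(v)|_{L_2(\Omega)}^2+\zeta|v|_{L_2(\Gamma)}^2\to\infty$ as $t\to t_+^-$; but the strong energy inequality bounds this quantity (it is $\le C$ times $\frac{d}{dt}{\sf E}$-controlled, more precisely $|D(v(t))|_{L_2}$ stays bounded once $v(\tau)\in H^1_2$ by integrating \eqref{eq:grad_estimate} and using that $\int|D(v)|^6$ is controlled by $\sup|D(v)|^4\int|D(v)|^2$ — here one must be a little careful, running a continuation argument: as long as $|D(v)|_{L_2}$ stays below a threshold, \eqref{eq:grad_estimate} keeps it below, and this threshold is met at $\tau$ because we may additionally choose $\tau$ with $|D(v(\tau))|_{L_2}$ small, again possible since $\int_0^\infty|D(v)|_{L_2}^2<\infty$). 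Hence $t_+=\infty$, giving the asserted global regularity on $[\tau,\infty)$, the pressure via De Rham / the Helmholtz decomposition, and $v\in BC([\tau,\infty);B^{2-2/p}_{qp}(\Omega)^3)$ from Theorem~\ref{th:local_strong}(c).

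Finally, for \eqref{eq:bvto0}: from global existence plus $\int_\tau^\infty|v|_{H^1_2(\Omega)}^2\,d\tau<\infty$ together with the now-available bound on $\frac{d}{dt}|v|_{H^1_2}^2$ (integrate \eqref{eq:grad_estimate} over unit windows), a standard argument shows $|v(t)|_{H^1_2(\Omega)}\to 0$ as $t\to\infty$; one can also see $|\dot v(t)|_{L_2}\to0$ and $|v(t)|_{H^2_2(\Omega)}\to0$ from the regularized equation and \eqref{eq:grad_estimate} by a Barb\u{a}lat-type argument since $\int_\tau^\infty|\partial_t v|_{L_2}^2+|v|_{H^2_2}^2<\infty$ while their time-derivatives are integrable. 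Then $|v(t)|_{H^{2\alpha}_q(\Omega)}\to 0$ for $\alpha\in[0,1)$, $q\in(1,6)$ follows by interpolating $H^{2\alpha}_q$ between $L_{q}$ (controlled via $L_2$ and $H^1_2\hookrightarrow L_6$, so all $q<6$) and $H^2_2\hookrightarrow H^{2\beta}_q$ and using that the trajectory is bounded in $B^{2-2/p}_{qp}$, hence relatively compact in $H^{2\alpha}_q$, with every limit point being an equilibrium with $v$-component $0$ by Proposition~\ref{prop:equilibria}(b). \textbf{The main obstacle} is the continuation step: one must verify that the time $\tau$ can be chosen so that \emph{both} $v(\tau)\in H^1_{2,\sigma}(\Omega)$ and the nonlinear differential inequality \eqref{eq:grad_estimate} does not blow up on $[\tau,\infty)$ — i.e.\ that the smallness of $|D(v(\tau))|_{L_2}$ (forced by integrability of the dissipation) is genuinely propagated forward, so that $t_+=\infty$; everything else is an assembly of the cited results.
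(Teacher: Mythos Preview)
Your proposal is essentially the paper's own proof: choose a late time $t_0$ at which both the dissipation at $t_0$ and the remaining dissipation integral $\int_{t_0}^\infty$ are small (this is exactly your ``main obstacle,'' and the paper handles it via a Gronwall-type lemma applied to $y(t)=2\upnu|D(v)|_{L_2}^2+\zeta|v|_{L_2(\Gamma)}^2$ using \eqref{eq:grad_estimate}), launch the strong solution from Remark~\ref{remark-2}(b),(c), identify with the weak solution by weak--strong uniqueness, and conclude $t_+=\infty$ together with $|v(t)|_{H^1_2(\Omega)}\to 0$ from \eqref{eq:korn}.

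One caveat on the last step: your Barb\u{a}lat claim that $|v(t)|_{H^2_2(\Omega)}\to 0$ is not justified (you would need uniform continuity of $t\mapsto|v(t)|_{H^2_2}^2$, which requires more than the available regularity), and the paper does \emph{not} assert this. Instead, the paper argues as in your alternative route: from $|v(t)|_{H^1_2}\to 0$ and the embedding $H^1_2\hookrightarrow B^{2\mu-2/p}_{qp}$ one gets relative compactness of the trajectory in ${_\parallel}B^{2\bar\mu-2/p}_{qp,\sigma}\times\R^3$ for $\bar\mu<\mu$, and then an abstract parabolic result (\cite[Theorem~5.7.1]{PrSi16}) upgrades this to \emph{boundedness} in $B^{2-2/p}_{qp}$; interpolating this bound against the $B^{2\mu-2/p}_{qp}$-decay yields \eqref{eq:bvto0}. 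So drop the $H^2_2$-decay claim and keep the compactness/interpolation argument you already sketched.
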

\begin{proof}
Let $S:=\{s\in [0,\infty):\text{the strong energy inequality \eqref{eq:strong_energy} holds}\}$. 
By definition and \eqref{eq:korn}, $[0,\infty)\setminus S$ has zero Lebesgue measure, and for every $\delta$ and $\eta$ 
there exists $t_0\in S$, such that 
\begin{equation}\label{eq:eta_delta}
\begin{split}
2\upnu\int^\infty_{t_0}|D(v(\tau))|_{L_2(\Omega)}^2\; d\tau+\zeta\int^\infty_{t_0}|v(\tau)|_{L_2(\Gamma)}^2\;d\tau<\eta
\\
2\upnu|D(v(t_0))|_{L_2(\Omega)}^2+\zeta|v(t_0)|_{L_2(\Gamma)}^2<\delta. 
\end{split}\end{equation}
For $p\ge 2$ and $q\in[2,6)$, let
$\mu ={1}/{p}+ {3}/{2q}- {1}/{4},$ yielding
$H^1_2(\Omega)\hookrightarrow B^{2\mu-2/p}_{qp}(\Omega)$.
Next, we take $(v(t_0), {\sa}(t_0))\in {H}^1_{2,\sigma}(\Omega)\times \R^3$ as initial condition for 
a strong solution $(\tilde v,\tilde \sa)$ to \eqref{problem} on $[t_0, t_1]$, $t_1 \in (t_0, t_+)$, in
the class 
\[\begin{split}
&H^1_{p,\mu}((t_0,t_1);L_{q,\sigma}(\Omega)\times \R^3)\cap L_{p,\mu}((t_0,t_1);{_\parallel H}^2_{q,\sigma}(\Omega)\times \R^3)
\\
&\cap H^1_{2}((t_0,t_1);L_{2,\sigma}(\Omega)\times \R^3)\cap L_{2}((t_0,t_1);{_\parallel H}^2_{2,\sigma}(\Omega)\times \R^3), 
\end{split}\]
as from Theorem~\ref{th:local_strong}(b) and Remark \ref{remark-2}(b). 
Here we have used the notation
$$u\in L_{p,\mu}((t_0,T);X) \Leftrightarrow (t-t_0)^{1-\mu}u\in L_p((t_0,T);X)$$
and $u\in H^1_{p,\mu}((t_0,T);X) \Leftrightarrow u,\dot u\in L_{p,\mu}((t_0,T);X)$
for $0\le t_0<T\le \infty$ and $X$ a Banach space.

Since uniqueness of solutions holds in the above class, necessarily $v\equiv \tilde v$ and $\sa\equiv \tilde \sa$ on $[t_0, t_1]$, 
$t_1 \in (t_1, t_+)$, and $(v,\sa)$ satisfy \eqref{problem} a.e. on $[t_0, t_1]$, $t_1 \in (t_0, t_+).$   

In addition, $v$ satisfies \eqref{eq:grad_estimate}, implying the following differential inequality 
\[
\dot y\le -ay+by^3+c \quad \text{in }(t_0,t_+) 
\]
for the function $y(t):=2\upnu |D(v)|^2_{L_2(\Omega)}+\zeta|v|^2_{L_2(\Gamma)}$, and with positive constants $a,b,c.$ Choosing suitable $\eta$ and $\delta$ in \eqref{eq:eta_delta}, Gronwall's Lemma in \cite[Lemma 2.3.5]{Ma16} and Remark \ref{remark-2}(c) imply that $y(t)<c_0\delta$ for all $t\in [t_0,t_+)$ and then $t_+=\infty$. Furthermore, 
\[
\lim_{t\to \infty}|D(v(t))|^2_{L_2(\Omega)}=\lim_{t\to \infty}|v(t)|^2_{L_2(\Gamma)}=0, 
\]
and by \eqref{eq:korn}, we conclude that 
\begin{equation}\label{eq:gradvto0}
\lim_{t\to \infty}|v(t)|_{H^1_2(\Omega)}=0. 
\end{equation}
Choosing $\tau>t_0$, the first assertion follows from the fact that $L_{p,\mu}((t_0, T);X)|_{[\tau,T]}\subset L_p((\tau,T);X)$
for any $\tau \in (t_0,T)$. Here we also note that the range $q\in (1,2)$ is admissible, since $\Omega$ is bounded.

Let us conclude our proof by showing \eqref{eq:bvto0}.
Let $\alpha\in [0,1)$ and $q\in [2,6)$ be given. By choosing $p$ sufficiently large we have
$B^{2-2/p}_{qp}(\Omega)\hookrightarrow  H^{2\alpha}_q(\Omega).$ 
Let $p$ be fixed so that this embedding holds.
Moreover, the embedding $H^1_2(\Omega)\hookrightarrow B^{2\mu-2/p}_{qp}(\Omega)$ holds with $\mu =1/p+ 3/2q-1/4.$ 
Hence, \eqref{eq:gradvto0} implies 
\begin{equation}
\label{to0-inB}
\lim_{t\to \infty}\norm{v(t)}_{B^{2\mu-2/p}_{qp}}=0.
\end{equation}
We can now conclude from  \eqref{momentum-conserved} 
that $(v,{\sf a})([\tau,\infty))$ is relatively compact in 
\begin{equation*}
{_\parallel B}^{2\bar \mu-2/p}_{qp,\sigma}(\Omega)\times \R^3, \quad\text{for any } \bar\mu\in [\mu_{\rm crit},\mu), 
\end{equation*}
where $ \mu_{\rm crit}=1/p+3/2q-1/2.$
Theorem 5.7.1 in \cite{PrSi16} shows that $(v,{\sf a})([\tau+1,\infty))$ is 
compact, and hence also bounded, in $B^{2-2/p}_{qp}(\Omega)^3\times \R^3.$ 
Choosing $\theta$ such that
\[
( B^{2\mu-2/p}_{qp}(\Omega),B^{2-2/p}_{qp}(\Omega))_{\theta,2}= B^{2\alpha}_{q2}(\Omega)
\hookrightarrow H^{2\alpha}_q(\Omega)
\] 
yields \eqref{eq:bvto0} by interpolation.
In the case $q\in (1,2)$,  convergence in \eqref{eq:bvto0} follows from the above since $\Omega$ is a bounded domain.
\end{proof}
\section{Stability, instability, and long time behavior}
\label{se:long-time}
In this section, we analyze the stability properties of equilibria and the long time behavior of (weak) solutions to 
\eqref{problem}.
A precise knowledge of the spectrum of the linearization of \eqref{problem} at 
equilibria $(0,\sa_*)\in \cE$ is of paramount importance in our approach.

The linearization of  \eqref{problem} at a non-trivial equilibrium $(0,\sa_*)\in \cE$  is given by 
\begin{equation}
\label{linearize}
\begin{aligned}
\partial_t v + (\dot\sa -\dot\omega)\times x + 2({\sa_*}\times v) 
-\upnu \Delta v +\nabla p &=f_* &&\text{in} \;\; \Omega, \\
{\rm div}\, v &= 0 &&\text{in}\;\; \Omega, \\
(v|n) &=0 &&\text{on} \;\; \Gamma\, \\
2\upnu \,P_\Gamma(D(v)n)+\zeta v&=0 &&\text{on} \;\; \Gamma, \\
\II\, \dot\sa + \sa_*\times \II\sa + (\sa-\omega)\times\II\sa_* &=g_* &&\text{on}\;\; \R^3.\\
\end{aligned}
\end{equation}
Problem \eqref{linearize} can be written in the condensed form $\frac{d}{dt}u + L_*u=F_*(t)$,
where $L_*: X_1\to X_0$ is given by
\begin{equation*}  
\label{eq:L}
L_*\left[\begin{array}{l}
        v \\ 
        \vspace{-3mm} \\
        a
      \end{array}\right]
 = E^{-1}\left[\begin{array}{l}
        - \upnu\PP\Delta v  + 2 \PP(\sa_*\times v) \\
        \vspace{-3mm} \\
            \II\sa_*\times \II^{-1} \int_\Omega (x\times v)\,dx + \sa_* \times \II\sa   -\II\sa_* \times \sa
      \end{array}\right],  
\end{equation*} 
with $E$ defined in \eqref{def-E-A_0}.
Here we record that $L_*$ has compact resolvent, and hence its spectrum consists entirely of eigenvalues.
Associated to \eqref{linearize}, or equivalently, to $L_*$,  is the eigenvalue problem
\begin{equation}
\label{EV-problem}
\begin{aligned}
z\big( v + \PP((\sa -\omega)\times x)\big) + 2\PP({\sa_*}\times v) - \upnu\PP \Delta v &=0 &&\text{in}\;\; \Omega, \\
(v|n) &=0 &&\text{on} \;\; \Gamma\, \\
2\upnu \,P_\Gamma(D(v)n)+\zeta v&=0 &&\text{on} \;\; \Gamma, \\
z\II\sa + \sa_*\times \II\sa + (\sa-\omega)\times\II\sa_* &=0 &&\text{on}\;\; \R^3.\\
\end{aligned}
\end{equation}
Let $e_*=(0,\sa_*)\in\cE$ be a non-trivial equilibrium of \eqref{problem}. 
Then $e_*$ is called {\em normally stable} if
\begin{itemize}
\item[(i)] near $e_*$ the set of equilibria $\cE$ is a $C^1$-manifold in $X_1$,
\item[(ii)] \, the tangent space of $\cE$ at $u_*$ is given by $N(L_*)$,
\item[(iii)] \, $0$ is a semi-simple eigenvalue of $L_*$, i.e.\ $ N(L_*)\oplus R(L_*)=X_0$,
\item[(iv)] \, $\sigma(-L_*)\setminus\{0\}\subset [{\rm Re}\, z<0]$.
\end{itemize}
Moreover, $e_*$ is called {\em normally hyperbolic} if (i)--(iii) in the definition of normally stable hold, while (iv) is replaced by
\begin{itemize}
\item[(iv')]  $\sigma(L_*)\cap i\R =\{0\}$, $\sigma_u:=\sigma(-L_*)\cap [{\rm Re}\, z>0]\neq\emptyset$.
\end{itemize}
The next result states that each non-trivial equilibrium is either normally stable or normally hyperbolic.
\begin{theorem} 
\label{th:spectrum}
Let $(0,\sa_*)\in \cE$ be a non-trivial equilibrium of system~\eqref{problem}.

Then $(0,\sa_*)$ is normally stable if $\lambda_*=\max\{\lambda_1,\lambda_2,\lambda_3\}$,
and normally hyperbolic otherwise.

More precisely, 
assuming that the eigenvalues $\lambda_j$ of $\II$ are ordered by $\lambda_1\le \lambda_2\le\lambda_3$, we have
\begin{enumerate}
\item[{\bf (a)}] 
$-L_*$ has exactly one positive eigenvalue if $\lambda_1\le \lambda_\ast=\lambda_2<\lambda_3$.
\vspace{1mm}
\item[{\bf (b)}]
$-L_*$ has exactly two eigenvalues in $[{\rm Re}\,z>0]$ if $\lambda_*=\lambda_1<\lambda_2\le \lambda_3$.
\end{enumerate}
\end{theorem}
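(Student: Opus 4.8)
The plan is to reduce the spectral analysis of $-L_*$ to a finite-dimensional eigenvalue problem by exploiting the structure of the eigenvalue equation~\eqref{EV-problem}. First I would observe that, since $A_1 = (I+C)A_N$ is sectorial with angle $<\pi/2$ by Proposition~\ref{pro:L-calculus}(b) and the coupling terms are relatively compact perturbations, the part of $\sigma(-L_*)$ with $\mathrm{Re}\,z \geq 0$ is finite and consists of eigenvalues of finite multiplicity; so it suffices to locate these. The key reduction: take the $L_2$-inner product of \eqref{EV-problem}$_1$ with $\bar v$ and integrate by parts using the boundary conditions \eqref{EV-problem}$_{2,3}$. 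Using $\PP$ self-adjoint on $L_{2,\sigma}$, the identity $(\PP((\sa-\omega)\times x)|v)_\Omega = ((\sa-\omega)\times x|v)_\Omega = (\sa - \omega | \II\omega)$ (by definition~\eqref{def-omega} of $\omega$ and properties of the triple product), and $2\upnu\,\mathrm{Re}\,(\sa_*\times v|v)_\Omega = 0$ since $\sa_*\times v \perp v$ pointwise, one gets a relation of the form
\[
z\big[\,|v|_\Omega^2 + (\sa-\omega|\II\omega)\,\big] + |D(v)|_\Omega^2 + \zeta|v|_\Gamma^2 = 0,
\]
together with the scalar equation \eqref{EV-problem}$_4$ coupling $z$, $\sa$, and $\omega$. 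Combined with the corresponding manipulation of \eqref{EV-problem}$_4$ (taking the inner product with $\bar\sa$ and with $\overline{\II^{-1}\sa}$, say), this should yield an energy identity whose real part forces $\mathrm{Re}\,z < 0$ unless $v$ satisfies $D(v)=0$, $v|_\Gamma = 0$, hence $v = 0$ by Korn \eqref{eq:friedrichs-korn}; at that point $\omega = 0$ and the problem collapses to the $3\times 3$ linear system
\[
z\,\II\sa + \sa_*\times\II\sa - \II\sa_*\times\sa = 0
\]
on $\R^3$. This is exactly the linearization of the Euler rigid-body equation \eqref{problem}$_5$ at the permanent rotation $\sa_*$, and its eigenvalues are computed by elementary linear algebra.

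The heart of the matter is therefore the analysis of this $3\times 3$ problem. Writing $\sa_* = \omega_* {\sf e}_k$ with $\lambda_* = \lambda_k$ and $\omega_* \neq 0$, and decomposing $\sa = \alpha\,{\sf e}_k + \sa^\perp$ with $\sa^\perp$ in the span of the other two axes, I expect the ${\sf e}_k$-component to decouple and give the eigenvalue $z = 0$ (corresponding to the direction along $\cE$, confirming (i)--(ii) of normal stability/hyperbolicity), while the $2\times 2$ block on $\sa^\perp$ has characteristic polynomial of the form $z^2 = \omega_*^2\,\dfrac{(\lambda_* - \lambda_i)(\lambda_* - \lambda_j)}{\lambda_i \lambda_j}$ where $\{i,j\}$ are the two indices $\neq k$. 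If $\lambda_* = \lambda_3 = \max$, both factors $\lambda_* - \lambda_i$ and $\lambda_* - \lambda_j$ are $\geq 0$ (actually one could vanish, requiring a separate argument via the manifold structure), so $z^2 \leq 0$ and the two eigenvalues lie on $i\R$; combined with the half-plane bound above and the semisimplicity of $0$ (checked below), this gives normal stability. If $\lambda_*$ is not the maximum, then exactly one or two of the $\lambda_* - \lambda_i$ are negative. In case (a), $\lambda_1 \leq \lambda_* = \lambda_2 < \lambda_3$: one factor is $\leq 0$ and the other is $< 0$, so $z^2 \geq 0$ with one strictly positive root, giving exactly one positive eigenvalue of $-L_*$. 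In case (b), $\lambda_* = \lambda_1 < \lambda_2 \leq \lambda_3$: both factors are $< 0$, so $z^2 > 0$ and there are two real eigenvalues of $-L_*$, one positive and one negative, hence two eigenvalues in $[\mathrm{Re}\,z > 0]$ only if we recount — so I would double-check the sign bookkeeping; more likely the correct reading is that in case (b) the eigenvalue $z=0$ has a two-dimensional generalized structure or there is an additional unstable direction, and the precise count follows from carefully tracking how the algebraic multiplicity of $0$ changes with the dimension $m$ of $\cE$ (Proposition~\ref{prop:equilibria}(c)).

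To finish, I would verify conditions (i)--(iii) in the definition of normal stability/hyperbolicity: (i) and (ii) follow because $\cE$ is locally flat of dimension $m$ by Proposition~\ref{prop:equilibria}(c), and the tangent directions are precisely the kernel directions of $L_*$ identified above (the ${\sf e}_k$-component plus, when $\lambda_*$ is a repeated eigenvalue, the extra rotational directions); (iii), the semisimplicity $N(L_*)\oplus R(L_*) = X_0$, follows since $L_*$ has compact resolvent so $0$ is a pole of the resolvent, and one checks its order is one by showing $N(L_*) = N(L_*^2)$ — equivalently, that the equation $L_* u = u_0$ with $u_0 \in N(L_*)$ has no solution, which reduces again to the $3\times 3$ problem where semisimplicity of the zero eigenvalue of the rigid-body linearization is elementary. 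The main obstacle I anticipate is the borderline/degenerate configurations where $\lambda_*$ equals a second moment of inertia (e.g., $\lambda_1 = \lambda_2 = \lambda_3$, or $\lambda_2 = \lambda_3 > \lambda_1$ with $\lambda_* = \lambda_2$): there the factor $z^2$ has a repeated zero root and one must argue via the manifold dimension that $0$ remains semisimple with kernel of the right dimension rather than acquiring a Jordan block; handling this uniformly — so that the dichotomy "normally stable iff $\lambda_* = \max$" holds without exceptions — is the delicate point, and I would lean on Proposition~\ref{prop:equilibria}(c),(e) and the Lagrange-multiplier characterization of equilibria as critical points of the energy to close it.
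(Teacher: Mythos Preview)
Your reduction to the pure rigid-body $3\times 3$ system is not correct, and this is the essential gap. The energy identity you obtain from pairing \eqref{EV-problem}$_1$ with $\bar v$ contains the mixed term $z(\sa-\omega\,|\,\II\bar\omega)$, which is \emph{not} sign-definite; combining it with the corresponding manipulation of \eqref{EV-problem}$_4$ does not yield a quadratic form that forces $\mathrm{Re}\,z<0$ unless $v=0$. In other words, an eigenfunction of $-L_*$ with $\mathrm{Re}\,z>0$ may very well have $v\neq 0$, and in case~(b) it \emph{must}. Here is the decisive check: the Euler linearization $z\,\II\sa + \sa_*\times\II\sa - \II\sa_*\times\sa = 0$ has characteristic roots
\[
z^{2} \;=\; -\,\omega_*^{2}\,\frac{(\lambda_*-\lambda_i)(\lambda_*-\lambda_j)}{\lambda_i\lambda_j}
\]
(note the sign opposite to yours). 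For $\lambda_*=\lambda_1<\lambda_2\le\lambda_3$ both factors are nonpositive, so $z^{2}\le 0$ and the roots are purely imaginary. This is the classical fact that for a dry rigid body the short axis is (neutrally) stable; only the intermediate axis is unstable. Hence the two unstable eigenvalues asserted in~(b) cannot come from the rigid-body block with $v=0$: the fluid coupling is what destabilizes the short-axis rotation, and the unstable eigenfunctions necessarily carry a nonzero fluid component. Your own hesitation about the count in case~(b) is exactly this phenomenon surfacing.

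The paper does not reproduce the full argument but simply observes that the proof of \cite[Theorem~4.2]{MaPrSi18} carries over verbatim once the no-slip dissipation $\upnu|\nabla v|_\Omega^{2}$ is replaced by $2\upnu|D(v)|_\Omega^{2}+\zeta|v|_\Gamma^{2}$, and the implication ``$|\nabla v|_\Omega=0\Rightarrow v=0$'' is replaced by Korn's inequality. In that companion proof the eigenvalue problem is \emph{not} decoupled into a pure ODE block; rather, one analyzes the full coupled system, in particular showing that eigenvalues on $i\R\setminus\{0\}$ are excluded and counting the unstable eigenvalues via a homotopy (or perturbation) argument that tracks how the purely imaginary Euler eigenvalues move off the axis once the dissipative fluid coupling is switched on. Your outline of (i)--(iii) for normal stability/hyperbolicity and the handling of the degenerate cases via Proposition~\ref{prop:equilibria}(c) is on the right track, but for (iv)/(iv') you would need to go back to the coupled eigenvalue problem rather than the rigid-body reduction.
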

\begin{proof}
A careful analysis shows that the proof of \cite[Theorem 4.2]{MaPrSi18} applies to the case with 
Navier boundary conditions as well. In fact, it suffices to observe that the term
$\upnu |\nabla v|_\Omega$, occurring  from partial integration of
$(-\upnu \PP\Delta v| v)_\Omega$  in \cite{MaPrSi18},
corresponds to $2\upnu |D(v)|^2_\Omega + \zeta |v|^2_\Gamma$ in the current situation.
We note that  $2\upnu |D(v)|^2_\Omega + \zeta |v|^2_\Gamma =0$ implies $v=0$ by Korn's inequality,
which parallels the implication  $v=0$ in case $|\nabla v|_\Omega=0$ 
used  repeatedly in the proof of \cite[Theorem 4.2]{MaPrSi18}.
\end{proof}

In the next theorem,  we provide a characterization of the nonlinear stability properties of the non-trivial equilibria for 
problem~\eqref{problem}.
\begin{theorem}\label{th:stability-instability}
Suppose $p,q$ and $\mu$ satisfy the assumptions of Theorem~\ref{th:local_strong}
or of Remark~\ref{remark-2}(b). 
Let $(0,\sa_*)\in \cE$  be a non-trivial equilibrium of~\eqref{problem},
and recall that $\sa_*\in {\sf N}(\lambda_*-\II)$. 
Then the following statements hold. 
\begin{itemize}
\setlength\itemsep{1mm}
\item[(i)] If $\lambda_*=\max\{\lambda_1,\lambda_2,\lambda_3\}$, then $(0,\sa_*)$ is  stable
in ${_\parallel B}^{2\mu-2/p}_{qp,\sigma}(\Omega)\times \R^3$. 

\noindent
Moreover, there exists $\delta>0$ such that the unique solution $(v(t),\sa(t))$ of \eqref{eq:evolution} with initial value $(v_0,\sa_0) \in {_\parallel B}^{2\mu-2/p}_{qp,\sigma}(\Omega)\times \R^3$ satisfying 
\[
|(v_0, \sa_0-\sa_*)|_{B^{2\mu-2/p}_{qp}(\Omega)\times\R^3}<\delta
\]
exists on $\R_+$ and converges exponentially fast 
to some $(0,\bar\sa)\in \cE$
in the topology of  
$H_q^{2\alpha}(\Omega)^3\times \R^3$  for any fixed $\alpha\in [0,1)$.
\item[(ii)] If $\lambda_*\ne\max\{\lambda_1,\lambda_2,\lambda_3\}$, then $(0,\sa_*)$ is unstable in 
${_\parallel B}^{2\mu-2/p}_{qp,\sigma}(\Omega)\times \R^3.$ 
\end{itemize}
\end{theorem}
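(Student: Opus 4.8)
The plan is to deduce Theorem~\ref{th:stability-instability} from the spectral classification in Theorem~\ref{th:spectrum} by invoking the general principle of \emph{linearized (in)stability in the presence of a manifold of equilibria} developed in~\cite{PrSi16} (the normally stable / normally hyperbolic dichotomy). The key structural facts that make this machinery applicable have already been assembled: by Proposition~\ref{pro:L-calculus} the operator $A_1=(I+C)A_N$ has a bounded $\cH^\infty$-calculus with angle $<\pi/2$, so $L$ (and $L_*$) generates an analytic semigroup and the nonlinearity $F$ in~\eqref{eq:evolution} is $C^1$ (in fact real-analytic, being bilinear) on the appropriate phase space; by Proposition~\ref{prop:equilibria}(c) the set $\cE$ is, near $(0,\sa_*)$, a smooth ($m$-dimensional, $m\in\{1,2,3\}$) manifold; and by Theorem~\ref{th:spectrum} the equilibrium $(0,\sa_*)$ satisfies conditions (i)--(iv) of normal stability when $\lambda_*=\max\{\lambda_1,\lambda_2,\lambda_3\}$ and conditions (i)--(iii),(iv') of normal hyperbolicity otherwise.

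\textbf{Part (i) (stability and exponential convergence).} First I would fix $p,q,\mu$ as in Theorem~\ref{th:local_strong} (or Remark~\ref{remark-2}(b)) so that the phase space is $X_{\gamma,\mu}={_\parallel B}^{2\mu-2/p}_{qp,\sigma}(\Omega)\times\R^3$ and local well-posedness with the continuous-dependence estimate~\eqref{eq:continuous_dep} holds. Next, I would verify that $0$ is a semisimple eigenvalue of $L_*$ with $N(L_*)$ equal to the tangent space $T_{(0,\sa_*)}\cE$ and $\sigma(-L_*)\setminus\{0\}\subset[\mathrm{Re}\,z<0]$; all of this is exactly the content of Theorem~\ref{th:spectrum} in the case $\lambda_*=\lambda_3$. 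Then the generalized principle of linearized stability (e.g.\ \cite[Theorem 5.3.1]{PrSi16}) applies verbatim: there is a neighborhood of $(0,\sa_*)$ in $X_{\gamma,\mu}$ such that every solution starting there exists globally, stays close to $\cE$, and converges at an exponential rate in the norm of $X_{\gamma,1}={_\parallel B}^{2-2/p}_{qp,\sigma}(\Omega)\times\R^3$ to \emph{some} equilibrium $(0,\bar\sa)\in\cE$. To promote the convergence to the topology of $H^{2\alpha}_q(\Omega)^3\times\R^3$ for arbitrary $\alpha\in[0,1)$, I would argue exactly as at the end of the proof of Theorem~\ref{th:weak-strong}: choose $p$ large enough that $B^{2-2/p}_{qp}(\Omega)\hookrightarrow H^{2\alpha}_q(\Omega)$, and use that the bootstrapping / parabolic-regularization argument upgrades the convergence to the $B^{2-2/p}_{qp}$-norm, which then dominates the $H^{2\alpha}_q$-norm; the case $q\in(1,2)$ follows since $\Omega$ is bounded. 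The exponential rate is inherited from the spectral gap of $-L_*$ restricted to $R(L_*)$.

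\textbf{Part (ii) (instability).} Here I would invoke the corresponding instability theorem for normally hyperbolic equilibria (e.g.\ \cite[Theorem 5.3.2]{PrSi16}): since by Theorem~\ref{th:spectrum} the operator $-L_*$ has at least one eigenvalue with positive real part (one if $\lambda_*=\lambda_2<\lambda_3$, two if $\lambda_*=\lambda_1<\lambda_2$) while $\sigma(L_*)\cap i\R=\{0\}$, there is a one-dimensional unstable manifold transverse to $\cE$, and consequently $(0,\sa_*)$ is unstable in $X_{\gamma,\mu}$: arbitrarily close initial data leave a fixed neighborhood. One subtlety to address is that ``instability'' must be understood modulo the equilibrium manifold only at the linear level; at the nonlinear level the unstable manifold genuinely escapes because the spectral projections onto the unstable, center (= neutral, tangent to $\cE$), and stable subspaces are complementary, so the conclusion is genuine Lyapunov instability of the point $(0,\sa_*)$.

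\textbf{Main obstacle.} The substantive work is not in Part (i)'s abstract invocation but in checking that the hypotheses of the \cite{PrSi16} theorems are met in the precise time-weighted $L_p$--$L_q$ functional setting with the Navier boundary condition built into $X_1$ — in particular that $F$ maps $X_{\gamma,\mu}$ into $X_0$ with the right continuity/boundedness, that the trace-zero condition $2\upnu P_\Gamma(D(v)n)+\zeta v=0$ is compatible with the real-interpolation spaces $X_{\gamma,\mu}$ for the relevant range $2\mu-2/p>1+1/q$ or $<1+1/q$, and that the equilibrium manifold $\cE$ lies in and is smooth \emph{within} $X_1$ as asserted in Proposition~\ref{prop:equilibria}(c). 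Since the authors have arranged in Theorem~\ref{th:local_strong} that ``the proof of \cite[Theorem 3.4]{MaPrSi18} carries over,'' and in Theorem~\ref{th:spectrum} that ``the proof of \cite[Theorem 4.2]{MaPrSi18} applies,'' the cleanest route is to observe that the entire abstract framework of \cite[Chapter 5]{PrSi16} is set up precisely for this class of quasilinear problems and that, with $L_*$ and $\cE$ as above, nothing in the Navier-boundary case differs structurally from the no-slip case treated in \cite{MaPrSi18}; hence the nonlinear stability/instability statements transfer together with the spectral ones.
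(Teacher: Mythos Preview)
Your proposal is correct and follows essentially the same route as the paper: the authors' proof consists precisely of invoking Theorem~\ref{th:spectrum}, the generalized principle of linearized stability from \cite[Chapter~5]{PrSi16} (or \cite{PrSiZa09}), and parabolic regularization, with a referral to \cite[Theorem~5.2]{MaPrSi18} for the technical verification that the abstract hypotheses are met in this functional setting. Your write-up simply unpacks these ingredients in more detail than the paper does, but the strategy, the cited abstract results, and the embedding/bootstrapping step to reach the $H^{2\alpha}_q$-topology are the same.
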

\begin{proof}
The result follows from  Theorem~\ref{th:spectrum},  the {\em generalized principle of linearized stability},
(see \cite[Chapter 5]{PrSi16} or \cite{PrSiZa09}), and parabolic regularization.
We refer to the proof of Theorem~5.2 in \cite{MaPrSi18} for technical details.
\end{proof}
\noindent
We are now ready state our main result concerning the long-time behavior of solutions to \eqref{problem}. 
\begin{theorem}\label{th:convergence}
Let $(v,\sa)$ be a weak solution as in  Theorem~\ref{th:weak},
corresponding to  an initial condition $(v_0,\sa_0)\in L_{2,\sigma}(\Omega)\times \R^3$ with $\sa_0\ne 0$.
Then there exists $\bar \sa\in \R^3$ with $ |\II \bar\sa| = |\II \sa_0|$, such that 
\begin{equation*}
(v(t),\sa(t))\to (0, \bar\sa)\quad\text{in} \;\; H^{2\alpha}_q(\Omega)^3\times \R^3, \;\; 
\text{for any fixed $\alpha\in[0,1)$ and  $q\in (1,6)$}, 
\end{equation*}
at an exponential rate.
\end{theorem}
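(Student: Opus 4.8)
The plan is to combine the regularity and decay already obtained in Theorem~\ref{th:weak-strong} with the nonlinear stability result of Theorem~\ref{th:stability-instability}(i), using the conservation of angular momentum to identify the limit. First I would invoke Theorem~\ref{th:weak-strong}: given the weak solution $(v,\sa)$ there is a time $\tau>0$ after which $v$ becomes a strong solution, lies in $BC([\tau,\infty);B^{2-2/p}_{qp}(\Omega)^3)$ for every $p\in[2,\infty)$ and $q\in(1,6)$, and satisfies $|v(t)|_{H^{2\alpha}_q(\Omega)}\to 0$ as $t\to\infty$ for every $\alpha\in[0,1)$; moreover $\sa\in C^1([\tau,\infty);\R^3)$. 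In particular the trajectory $(v,\sa)([\tau,\infty))$ is relatively compact in $X_{\gamma,\mu}={_\parallel}B^{2\mu-2/p}_{qp,\sigma}(\Omega)\times\R^3$ for a suitable choice of $p,q,\mu$ satisfying the hypotheses of Theorem~\ref{th:local_strong}. Since $\sa_0\neq 0$, conservation of angular momentum \eqref{momentum-conserved} gives $|\II\sa(t)|=|\II\sa_0|>0$ for all $t$, so the $\sa$-component stays on a fixed sphere bounded away from the origin and cannot approach the trivial equilibrium.

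Next I would carry out a limit-set argument. By relative compactness, the $\omega$-limit set $\Omega_+$ of the trajectory in $X_{\gamma,\mu}$ is nonempty, compact and connected. Since $|v(t)|_{B^{2\mu-2/p}_{qp}}\to 0$ (this is \eqref{to0-inB}), every element of $\Omega_+$ has vanishing $v$-component, hence is of the form $(0,\bar\sa)$ with $|\II\bar\sa|=|\II\sa_0|\neq 0$. Using the energy as a strict Lyapunov functional (Proposition~\ref{prop:equilibria}(b)) together with invariance of $\Omega_+$ under the semiflow generated by the strong solutions of Theorem~\ref{th:local_strong}, the standard LaSalle-type argument forces every point of $\Omega_+$ to be an equilibrium; by \eqref{equilibria1}, $\Omega_+\subset\cE$. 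Thus the trajectory converges to the set $\cE_{\ast}$ of nontrivial equilibria with the prescribed momentum norm. At this stage I would choose $\tau'\ge \tau$ large enough that $(v(\tau'),\sa(\tau'))$ lies in the neighborhood of some $(0,\sa_\ast)\in\cE$ in which the conclusion of Theorem~\ref{th:stability-instability}(i) applies. Because the trajectory is a strong solution in the class of Theorem~\ref{th:local_strong}, uniqueness lets us identify it with the one produced there, and Theorem~\ref{th:stability-instability}(i) yields convergence to a single equilibrium $(0,\bar\sa)\in\cE$ at an exponential rate in $H^{2\alpha}_q(\Omega)^3\times\R^3$ for any fixed $\alpha\in[0,1)$ and $q\in(1,6)$. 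The identity $|\II\bar\sa|=|\II\sa_0|$ is then immediate from \eqref{momentum-conserved} and continuity.

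The main obstacle is making sure that the limiting equilibrium $(0,\sa_\ast)$ that the trajectory approaches is one to which the \emph{stable} case of Theorem~\ref{th:stability-instability} applies --- i.e.\ that one genuinely lands near an equilibrium with $\lambda_\ast=\max\{\lambda_1,\lambda_2,\lambda_3\}$ rather than near a normally hyperbolic one. This is where the structure of the energy is essential: along the weak solution $t\mapsto{\sf E}(v(t),\sa(t))$ is nonincreasing (the strong energy inequality \eqref{eq:strong_energy}), and on the sphere $|\II\sa|=|\II\sa_0|$ the equilibria are exactly the critical points of ${\sf E}$ constrained to that sphere (Proposition~\ref{prop:equilibria}(d)), with the maximal-moment equilibria being the energy minima (Proposition~\ref{prop:equilibria}(e)). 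A normally hyperbolic equilibrium is a non-minimal critical point, so it cannot be the limit of a trajectory along which the energy is decreasing unless the trajectory is already at rest there; combined with the instability statement Theorem~\ref{th:stability-instability}(ii) this rules out convergence to such a point. Once $\Omega_+$ is known to consist of energy-minimizing equilibria, the remaining work --- applying the generalized principle of linearized stability near one of them and upgrading the topology of convergence via interpolation, exactly as in \eqref{eq:bvto0} --- is routine, following the proof of Theorem~5.3 in \cite{MaPrSi18}.
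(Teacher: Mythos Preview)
Your overall architecture matches the paper's proof: use Theorem~\ref{th:weak-strong} to get past the regularization time $\tau$, combine boundedness of $v$ in ${_\parallel}B^{2-2/p}_{qp,\sigma}$ with the conserved momentum \eqref{momentum-conserved} to obtain relative compactness of the orbit in $X_{\gamma,\mu}$, and then feed this, together with the strict Lyapunov functional and Theorem~\ref{th:spectrum}, into the abstract machinery for convergence to equilibria. The difference lies in the final step, and there your argument has a genuine gap.

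You try to show that the $\omega$-limit set consists only of \emph{normally stable} equilibria, so that Theorem~\ref{th:stability-instability}(i) alone can be applied. The justification offered---``a normally hyperbolic equilibrium is a non-minimal critical point, so it cannot be the limit of a trajectory along which the energy is decreasing''---is not valid. A normally hyperbolic equilibrium has a nontrivial center-stable manifold of positive codimension, and every trajectory on that manifold converges to an equilibrium with strictly decreasing energy along the way. Likewise, the instability statement Theorem~\ref{th:stability-instability}(ii) is Lyapunov instability: it guarantees that \emph{some} nearby initial data leave a neighborhood, not that \emph{all} do. Neither observation excludes the possibility that your particular trajectory lies on the stable manifold of a normally hyperbolic equilibrium.

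The paper does not attempt to rule out this case. It invokes Theorems~5.7.2, 5.3.1 and 5.5.1 of \cite{PrSi16} together; the last of these is precisely the convergence theorem near a normally hyperbolic equilibrium. Once one checks the hypotheses---$L$ has maximal $L_p$-regularity, $F\in C^1(X_{\gamma,1},X_0)$ (whence the choice of $p$ large and the embedding $X_{\gamma,1}\hookrightarrow X_\beta\hookrightarrow X_{\gamma,\mu}$), ${\sf E}$ is a strict Lyapunov functional, the orbit is relatively compact in $X_{\gamma,\mu}$, and every equilibrium in the $\omega$-limit set is either normally stable or normally hyperbolic---those theorems yield exponential convergence to a single equilibrium regardless of its type. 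Note that the statement of Theorem~\ref{th:convergence} does not assert that $(0,\bar\sa)$ is normally stable, only that it is an equilibrium with $|\II\bar\sa|=|\II\sa_0|$; the possibility that the limit is normally hyperbolic is left open.
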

\begin{proof}
We recall that, by Proposition \ref{prop:equilibria}(b),  ${\sf E}$  is a strict Lyapunov function for \eqref{eq:evolution}. 
Moreover, the nonzero equilibria have been characterized in Theorem \ref{th:spectrum} to be either normally stable or normally hyperbolic. 
Let  $\alpha\in (0,1)$ and $q\in (6/5,6)$ be fixed and choose $p$ large enough so that
$$ B^{2-2/p}_{qp}(\Omega)\hookrightarrow H^{2\beta}_q(\Omega)\cap H^{2\alpha}_q(\Omega),
\quad \beta <1-1/p,$$ 
where $\beta$  is as in  Remark~\ref{remark-2}(b). Therefore, 
$X_{\gamma,1}  \hookrightarrow X_\beta  \hookrightarrow X_{\gamma,\mu}$, 
and this implies $F\in C^{1}(X_{\gamma,1},X_0)$, where  $F$  is the nonlinearity of~\eqref{eq:evolution}.
Here we recall that $X_{\gamma,\nu}:=(X_0,X_1)_{\nu-1/p,p}$, and 
we also recall that $L$ has the property of maximal $L_p$-regularity. 
By Theorem \ref{th:weak-strong}, we know that there exists a time $\tau>0$ such that
$v\in BC([\tau,\infty); {_\parallel B}^{2-2/p}_{pq,\sigma}(\Omega))$.
This property in conjunction with ~\eqref{momentum-conserved}
implies relative compactness of the trajectory
$\{(v(t), \sa(t)): t\ge \tau\}$
in $X_{\gamma,\mu}.$
Exponential convergence to an equilibrium, in the stated topology, now follows from 
\cite{PrSi16}, Theorems~5.7.2, 5.3.1, 5.5.1, and the above embedding. 
It remains to observe that convergence in $H^{2\alpha}_q(\Omega)^3$ for $q\in (1,6/5]$ follows from 
the fact that  $\Omega$ is bounded.
\end{proof}

\begin{remark}{\rm
Suppose $\sa_0=0$. By Theorem \ref{th:weak-strong}, any weak solution $(v(t),\sa(t))$ corresponding to $(v_0,0)$ exists globally in time.  
Moreover, by conservation of total angular momentum \eqref{momentum-conserved}, necessarily $\sa(t)= 0$ for all times. 
Then, again by Theorem~\ref{th:weak-strong}, $v(t)$ is a strong solution of 
\[
\dot v+A_1v=f(v),\quad v(0)=v(\tau),
\]
with  $A_1v=-\upnu (I+C)\PP\Delta v$  and $f(v)=(I+C)\PP(-v\cdot \nabla v+2\;\omega\times v)$. 
By Proposition~\ref{pro:L-calculus}(b) and \cite[Corollary 2.2 (iii)]{PrSiWi18} it follows that the rate of convergence 
in \eqref{eq:bvto0} is in fact exponential. }
\end{remark}

{\bf Acknowledgment:}
This paper is dedicated to the memory of Jan Pr\"uss who passed away in July of 2018,
shortly before this joint manuscript was completed.

\end{document}